\newcolumntype{L}{D{.}{.}{2,5}}
\theoremstyle{plain}
\newtheorem{thm}{Theorem}
\newtheorem{proposition}{Proposition}
\newtheorem{lemma}{Lemma}
\newtheorem{corollary}[proposition]{Corollary}
\newtheorem{example}{Example}
\DeclareMathOperator{\cl}{cl}
\DeclareMathOperator{\co}{co}
\DeclareMathOperator{\CAT}{CAT}
\theoremstyle{definition}
\DeclareMathOperator{\Limsup}{\text{Ls}}
\DeclareMathOperator{\Liminf}{\text{Li}}
\DeclareMathOperator{\Lim}{\text{Lim}}
\DeclareMathOperator{\thr}{thr}
\DeclareMathOperator{\inter}{int}
\begin{document}

\title[Compact sets and the closure of their convex hulls in CAT(0) spaces]{Compact sets and the closure of their convex hulls in CAT(0) spaces}
%
%
\author{
{Arian B\"erd\"ellima}}
 \address{Technische Universit\"at Berlin}\address{10623 Berlin, Germany}
 \thanks {MSC: 52A05, 52A27, 54D30, 54E45, 30L05}

 \begin{abstract}
 We study the closure of the convex hull of a compact set in a complete $\CAT(0)$ space. First we give characterization results in terms of compact sets and the closure of their convex hulls for locally compact $\CAT(0)$ spaces that are either regular or satisfy the geodesic extension property. Later inspired by a geometric interpretation of Carath\'eodory's Theorem we introduce the operation of threading for a given set. We show that threading exhibits certain monotonicity properties with respect to intersection and union of sets. Moreover threading preserves compactness. Next from the commutativity of threading with any isometry mapping we prove that in a flat complete $\CAT(0)$ space the closure of the convex hull of a compact set is compact.
 We apply our theory to the computability of the Fr\'echet mean of a finite set of points and show that it is constructible in at most a finite number of steps, whenever the underlying space is of finite type. 
\end{abstract} 

\maketitle
\noindent {\bfseries Keywords:}
convex hulls, compact sets, $\CAT(0)$ space, threading, Fr\'echet mean.

\section{Introduction}\label{s:intro}
It is known that in a Hilbert space given a compact set the closure of its convex hull is compact \footnote{This is true for any locally convex topological vector space, see e.g. \cite[Theorem 5.35, p.185]{Border}.}. In finite dimensional Euclidean spaces even a stronger result holds that the convex hull itself of a compact set is compact, a conclusion that follows immediately from Carath\'eodory's Theorem. Here we investigate compact sets and the closure of their convex hulls in the setting of a complete $\CAT(0)$ space. $\CAT(0)$ spaces are geodesically connected metric spaces where every geodesic triangle is at least as {\em thin} as its comparison triangle in the Euclidean plane. This property
generalizes in a way the notion of nonpositive sectional curvature for a Riemannian manifold. Such a definition for curvature in a metric space is due to Alexandrov \cite{Alexandrov} and it was later popularized by Gromov \cite{Gromov2},  \cite{Gromov}. Often one refers to a $\CAT(0)$ space as a metric space of nonpositive curvature in the sense of Alexandrov (see \cite{Ballman}, \cite{Brid}). A Hilbert space is a particular example of a complete $\CAT(0)$ space. 

When a $\CAT(0)$ space is complete and locally compact then the closure of the convex hull of a compact set is always compact. This is a direct corollary of Hopf--Rinow Theorem which states that closed and bounded sets are compact whenever the underlying geodesic metric space is complete and locally compact. Indeed if a set is compact then it must be bounded and closed, thus contained in a closed geodesic ball of a certain radius. Because geodesic balls in $\CAT(0)$ spaces are convex then the closure of the convex hull of the given compact set must certainly be included in this ball and consequently be compact.
While the problem seems to be rather trivial for the locally compact case, it is not so when local compactness is removed as an assumption.  The question remains widely open even for the simplest case of a set of only three points as pointed out by Gromov \cite{Gromov}. Moreover in the locally compact case it is not clear whether the convex hull itself of a compact set, without taking its closure, is compact.

An elementary geometric interpretation of Carath\'eodory's Theorem (see Example \ref{example1}) and the geodesic structure of a $\CAT(0)$ space leads us to define a set operation that we call {\it{threading}}. This notion helps us derive sufficient conditions for when the closure of the convex hull of a compact set is compact. 
Essentially threading of a set $S\subseteq X$ is the union of all geodesic segments with endpoints in $S$. It is well behaved under the intersections and unions of sets and satisfies certain monotonicity properties. Moreover this operation preserves compactness. This has an important implication for our main problem. If the underlying space is of {\it finite type} then for every compact set its convex hull is compact. In relation to this topic Kope\v cka and Reich have made the following important observation that it suffices to consider the problem for finite sets only \cite[Theorem 2.10]{Reich} i.e. for any finite set $\{x_1,\cdots,x_n\}\subseteq X$, if $\cl\co\{x_1,\cdots,x_n\}$ is compact then for each compact set $S\subseteq X$ the closure of its convex hull $\cl\co S$ is compact. To verify this condition it is enough then to show that every finite set has a {\it finite threading degree}. Moreover using our theory of threading we are able to show that at least in the case of {\it flat} complete $\CAT(0)$ spaces the closure of the convex hull of any compact set is compact. This in turn solves the problem for a certain class of $\CAT(0)$ spaces. As a practical application of our theory we consider the computation of the Fr\'echet mean from a given finite set of points. We prove that in a $\CAT(0)$ space of finite type the Fr\'echet mean of a finite set $S$ lies in the convex hull $\co S$ and it is {\it constructible} from $S$ in at most a finite number of steps. The Fr\'echet mean is relevant in computational biology, in particular for computing the average phylogenetic tree from a given finite collection of phylogenetic trees see e.g. the seminal work of \cite{BHV}[Billera et al.] and more recently \cite{OwenLub}[Owen et al.].

Our work develops along the following lines. In Section \ref{s:preliminaries} we present some basic concepts about $\CAT(0)$ spaces and Painlev\'e--Kuratowski convergence. 
In Section \ref{s:characterization}
we state and prove characterization results about locally compact spaces in terms of compact sets and the closure of their convex hulls. In particular we prove that a complete $\CAT(0)$ space is locally compact if and only if the Painlev\'e--Kurtowski limit of any bounded nondecreasing net of compact sets is compact (Theorem \ref{th:regular}, Corollary \ref{c:regular}). Secondly we show that a complete $\CAT(0)$ space satisfying the geodesic extension property is locally compact if and only if it fulfills the so called {\it finite set property} and for every compact set the closure of its convex hull is compact (Theorem \ref{finiteset}). In Section \ref{s:threading} we introduce the operation of threading on a set. We provide a characterization result of the convex hull of a set in terms of its threadings of various degrees (Theorem \ref{thrcvxthm}). Moreover we prove that threading preserves compactness of a set (Theorem \ref{thrcompact}). By using the commutativity of threading with any isometry mapping we show that in a flat complete $\CAT(0)$ space the closure of the convex hull of a compact set is always compact (Theorem \ref{th:flat}). We also illustrate threading with a couple of examples. In Section \ref{s:Frechet} we apply our theory to computability of the Fr\'echet mean of a given finite set $S$. We show that the Fr\'echet mean always lies on the closure of the convex hull $\cl\co S$ (Lemma \ref{Frechet}) and that it is constructible from $S$ in a finite number of steps depending on the threading degree of $S$, whenever the underlying space is of finite type (Theorem \ref{complexity}).

\section{Preliminaries}
\label{s:preliminaries}
\subsection{$\CAT(0)$ spaces}
Let $(X,d)$ be a metric space. A {\em geodesic segment} starting from $x\in X$ and ending at $y\in X$ is a mapping $\gamma:[0,\ell]\to X$ such that $\gamma(0)=x,\gamma(\ell)=y$ and $d(\gamma(t_1),\gamma(t_2))= |t_1-t_2|$ for all $t_1,t_2\in [0,\ell]$. Often we denote this segment by $[x,y]$. A metric space $(X,d)$ is a (uniquely) geodesic metric space if every two elements $x,y\in X$ are connected by a (unique) geodesic segment. 
The collection of three elements $x,y,z\in X$ and the geodesic segments connecting them $[x,y], [y,z]$ and $[z,x]$ determines a geodesic triangle $\Delta(x,y,z)$. 
To every geodesic triangle corresponds a comparison triangle in the Euclidean plane, that is, three line segments $[\overline{x},\overline{y}], [\overline{y},\overline{z}]$ and $[\overline{z},\overline{x}]$ in $\mathbb R^2$, such that $d(x,y)=\|\overline{x}-\overline{y}\|, d(y,z)=\|\overline{y}-\overline{z}\|, d(z,x)=\|\overline{z}-\overline{x}\|$. A comparison point for $x'\in[x,y]$ is a point $\overline{x}'\in[\overline{x},\overline{y}]$ such that $d(x,x')=\|\overline{x}-\overline{x}'\|$. A geodesic metric space $(X,d)$ is a $\CAT(0)$ space if for every geodesic triangle $\Delta(x,y,z)$ and every $x'\in[x,y], y'\in[y,z]$ the inequality $d(x',y')\leq\|\overline{x}'-\overline{y}'\|$ holds true, where $\overline{x}'\in[\overline{x},\overline{y}], \overline{y}'\in[\overline{y},\overline{z}]$ are the comparison points of $x'$ and $y'$ respectively. 

A geodesic metric space $(X,d)$ is said to have the {\em geodesic extension} property if for every geodesic $\gamma:[0,\ell]\to X$ there exists $\varepsilon>0$ and a geodesic $\widetilde{\gamma}:[0,\widetilde \ell]\to X$ such that $\widetilde \ell=\ell+\varepsilon$ and 
$\widetilde{\gamma}\mid _{[0,\ell]}=\gamma$. 
In the particular case when $(X,d)$ is a $\CAT(0)$ space then geodesic extension property is equivalent to saying that any geodesic segment of positive length can be extended indefinitely to a geodesic line $\gamma:\mathbb{R}\to X$ \cite[Lemma 5.8 $(2)$]{Brid}. Examples of $\CAT(0)$ spaces satisfying geodesic extension property include but are not limited to Hilbert spaces, Hadamard manifolds, polyhedral complexes without free faces \cite[Proposition 5.10]{Brid}, and in general any $\CAT(0)$ space that is homeomorphic to a finite dimensional manifold \cite[Proposition 5.12]{Brid}.

Given $x\in X$ and $r>0$ we denote by $\mathbb B[x,r]\coloneqq \{y\in X\,:\,d(x,y)\leq r\}$ (a closed geodesic ball) and $\mathbb B(x,r)\coloneqq \{y\in X\,:\,d(x,y)< r\}$ (an open geodesic ball).
A set $C\subseteq X$ is convex if for any $x,y\in C$ the segment $[x,y]$ is entirely contained in $C$. Closed and open geodesic balls are examples of convex sets in a $\CAT(0)$ space.
Given a set $C\subseteq X$ we define $P_Cx\coloneqq\{y\in C\;:\; d(x,y)=d(x,C)\}$. When  $X$ is a complete $\CAT(0)$ space \footnote{Complete $\CAT(0)$ spaces are also known as {\em Hadamard spaces}.} and $C$ is a closed convex set then $P_Cx$ is nonempty and a singleton for every $x\in X$ (\cite[Proposition 2.4]{Berdellima}). Moreover the inequality holds
\begin{equation}
\label{eq:projectionsineq}
d(x,P_Cx)^2+d(P_Cx,y)^2\leq d(x,y)^2,\;\forall x\in X,\forall y\in C.
\end{equation}
Given $t\in[0,1]$ we let $x_t\coloneqq(1-t)x\oplus ty$ be the element on $[x,y]$ such that $d(x_t,x)=td(x,y)$.
An equivalent characterization of a $\CAT(0)$ space \cite[Definition 1.2.1]{Bacak} is given by 
\begin{equation}
\label{eq:quadratic}
d(x_t,z)^2\leq (1-t)d(x,z)^2+td(y,z)^2-t(1-t)d(x,y)^2,\;\forall x,y,z\in X,\forall t\in[0,1].
\end{equation}
From this inequality it can be shown that a $\CAT(0)$ space is uniquely geodesic. A $\CAT(0)$ space is called {\it flat} if inequality \eqref{eq:quadratic} holds with equality everywhere. A Hilbert space is a flat complete $\CAT(0)$ space.
A non-Hilbertian example is the Wasserstein space of probability measures on $\mathbb R$. For a geometric treatment of such spaces see e.g. \cite{Benoit}.

\subsection{Nets and Painlev\'e--Kuratowski convergence}

Let $(A,\preceq)$ be a directed set and $(X,d)$ a metric space equipped with the usual metric topology. A {\em net} $(x_{\alpha})_{\alpha\in A
}$ in $X$ is a mapping $\psi:A\to X$ ($\alpha\mapsto x_{\alpha}$).  We say a net $(x_{\alpha})_{\alpha\in A}$ in $X$ converges to an element $x$ in $X$ if for every neighborhood $U\subseteq X$ of $x$ there exists $\alpha_0\in A$ such that $x_{\alpha}\in U$ for every $\alpha\succeq \alpha_0$. The definition of nets can be extended to that of nets of sets. Let $\mathcal P(X)$ denote the set of all subsets in $X$. A {\em net of sets} $(S_{\alpha})_{\alpha\in A}$ in $\mathcal{P}(X)$ is a mapping $\psi:A\to \mathcal{P}(X)$ ($\alpha\mapsto S_{\alpha}$). We say a net of sets $(S_{\alpha})_{\alpha\in A}$ is nondecreasing (nonincreasing) if $S_{\alpha}\subseteq S_{\beta}$ ($S_{\beta}\subseteq S_{\alpha}$) whenever $\alpha\preceq\beta$.
A collection of sets $\{S_{\alpha}\}_{\alpha\in A}$ is a {\em chain} in $X$ if $S_{\alpha}\subseteq S_{\beta}$ or $S_{\beta}\subseteq S_{\alpha}$ whenever $\alpha\neq \beta$ i.e. $\{S_{\alpha}\}_{\alpha\in A}$ is a {\em totally ordered} subset of $\mathcal{P}(X)$. 
It is clear that a chain is a net of sets that is either nondecreasing or nonincreasing.
Given a net of sets $(S_{\alpha})_{\alpha\in A}$ in $X$ its Painlev\'e--Kuratowski outer and inner limit are defined respectively as 
\begin{equation}
\label{eq:upperlimit}
\Limsup_{\alpha}S_{\alpha}\coloneqq\{x\in H: \forall V\in\mathcal{N}(x), \forall \alpha_0\in A, \exists \alpha\succeq\alpha_0,\text{with}\; V\cap S_{\alpha}\neq\emptyset\}
\end{equation}
\begin{equation}
\label{eq:lowerlimit}
\Liminf_{\alpha}S_{\alpha}\coloneqq\{x\in H: \forall V\in\mathcal{N}(x), \exists \alpha_0\in A,\text{with}\; V\cap S_{\alpha}\neq\emptyset,\;\forall\alpha\succeq\alpha_0\}
\end{equation}
where $\mathcal{N}(x)$ is the collection of neighborhoods at $x$. If \eqref{eq:upperlimit} and \eqref{eq:lowerlimit} coincide then the limit of the net $(S_{\alpha})_{\alpha\in A}$ exists and we denote it by $\Lim_{\alpha}S_{\alpha}$. 
Note that when $(A,\preceq)=(\mathbb{N},\leqslant)$ then the above definition coincides with the usual definition of Painlev\'e--Kuratowski limit of a sequence of sets see e.g. \cite[Rockafellar and Wetts]{Rockafellar2}.  
It is known that in general any sequence of sets has a subsequence converging either to a nonempty set or the so called {\em horizon} (see \cite[Theorem 3.11]{Lukenotes}). However for our purposes we only need the following lemma concerning nondecreasing chains. 
\begin{lemma}
	\label{Painleve1}
	Let $X$ be a topological space and $(S_{\alpha})_{\alpha\in A}$ be a nondecreasing chain in $X$. Then $\Lim_{\alpha}S_{\alpha}=\cl\bigcup_{\alpha\in A}S_{\alpha}$.
\end{lemma}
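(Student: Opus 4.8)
The plan is to establish the two inclusions
$\Limsup_{\alpha} S_{\alpha} \subseteq \cl\bigcup_{\alpha\in A} S_{\alpha}$
and
$\cl\bigcup_{\alpha\in A} S_{\alpha} \subseteq \Liminf_{\alpha} S_{\alpha}$,
and then to combine them with the general fact that $\Liminf_{\alpha} S_{\alpha} \subseteq \Limsup_{\alpha} S_{\alpha}$ for any net of sets indexed by a (nonempty) directed set. Chaining the three inclusions forces $\Liminf_{\alpha} S_{\alpha}$, $\Limsup_{\alpha} S_{\alpha}$ and $\cl\bigcup_{\alpha\in A} S_{\alpha}$ to coincide; in particular $\Lim_{\alpha} S_{\alpha}$ exists and equals $\cl\bigcup_{\alpha\in A} S_{\alpha}$.

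First I would prove $\Limsup_{\alpha} S_{\alpha} \subseteq \cl\bigcup_{\alpha\in A} S_{\alpha}$, which uses nothing about the chain structure. Fix $x \in \Limsup_{\alpha} S_{\alpha}$ and an arbitrary $V \in \mathcal{N}(x)$. Applying the definition \eqref{eq:upperlimit} with any fixed $\alpha_0 \in A$ yields some $\alpha \succeq \alpha_0$ with $V \cap S_{\alpha} \neq \emptyset$, hence $V \cap \bigcup_{\beta\in A} S_{\beta} \neq \emptyset$. Since $V$ was an arbitrary neighborhood of $x$, this is exactly the statement that $x \in \cl\bigcup_{\beta\in A} S_{\beta}$.

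Next, for $\cl\bigcup_{\alpha\in A} S_{\alpha} \subseteq \Liminf_{\alpha} S_{\alpha}$, the monotonicity of the chain does the work. Fix $x \in \cl\bigcup_{\alpha\in A} S_{\alpha}$ and $V \in \mathcal{N}(x)$. Then $V \cap \bigcup_{\alpha\in A} S_{\alpha} \neq \emptyset$, so there is an index $\alpha_0 \in A$ with $V \cap S_{\alpha_0} \neq \emptyset$. Because $(S_{\alpha})_{\alpha\in A}$ is nondecreasing, $S_{\alpha_0} \subseteq S_{\alpha}$ for every $\alpha \succeq \alpha_0$, and therefore $V \cap S_{\alpha} \neq \emptyset$ for all $\alpha \succeq \alpha_0$; by \eqref{eq:lowerlimit} this means $x \in \Liminf_{\alpha} S_{\alpha}$.

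The remaining ingredient $\Liminf_{\alpha} S_{\alpha} \subseteq \Limsup_{\alpha} S_{\alpha}$ is a soft consequence of directedness: given $x \in \Liminf_{\alpha} S_{\alpha}$, $V \in \mathcal{N}(x)$ and an arbitrary $\alpha_1 \in A$, choose $\alpha_0$ as in \eqref{eq:lowerlimit} and then, using that $A$ is directed, an index $\alpha$ with $\alpha \succeq \alpha_0$ and $\alpha \succeq \alpha_1$; then $V \cap S_{\alpha} \neq \emptyset$, which is the condition in \eqref{eq:upperlimit}. I do not expect any genuine obstacle here; the statement is essentially a bookkeeping exercise with the quantifiers in \eqref{eq:upperlimit} and \eqref{eq:lowerlimit}. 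The only points requiring a little care are to use the nondecreasing property with respect to the directed order $\preceq$ on $A$ — so that $\alpha \succeq \alpha_0$ truly gives $S_{\alpha_0} \subseteq S_{\alpha}$ — and to note that $A$ is nonempty, which is what legitimizes the $\Liminf_{\alpha} S_{\alpha} \subseteq \Limsup_{\alpha} S_{\alpha}$ step.
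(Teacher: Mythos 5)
Your proof is correct and follows essentially the same route as the paper: the substantive step in both is that monotonicity of the chain upgrades ``$V$ meets some $S_{\alpha_0}$'' to ``$V$ meets $S_{\alpha}$ for all $\alpha\succeq\alpha_0$'', giving $\cl\bigcup_{\alpha}S_{\alpha}\subseteq\Liminf_{\alpha}S_{\alpha}$, while the reverse containment and the coincidence of inner and outer limits are quantifier bookkeeping. Your organization via the cyclic chain of three inclusions is marginally cleaner than the paper's (which asserts $\Limsup\subseteq\Liminf$ from monotonicity separately and then proves $\Lim\subseteq\cl\bigcup$ directly), but the content is identical.
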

\begin{proof}
	Let $x\in \cl\bigcup_{\alpha\in A}S_{\alpha}$ then for every $V\in\mathcal{N}(x)$ we have $V\cap\Big(\bigcup_{\alpha\in A} S_{\alpha}\Big)\neq\emptyset$. So there exists some $\alpha_0\in A$ such that $V\cap S_{\alpha_0}\neq\emptyset$. Define $N\coloneqq\{\alpha\in A:\alpha\succeq \alpha_0\}$. Assumption that $(S_{\alpha})_{\alpha\in A}$ is a nondecreasing chain implies that $V\cap S_{\alpha}\neq\emptyset$ for all $\alpha\in N$. By definition \eqref{eq:lowerlimit} we get $x\in\Liminf_{\alpha}S_{\alpha}$. On the other hand nondecreasing property of the chain $(S_{\alpha})_{\alpha\in A}$ implies $\Limsup_{\alpha}S_{\alpha}\subseteq\Liminf_{\alpha}S_{\alpha}$ hence $\Liminf_{\alpha}S_{\alpha}=\Limsup_{\alpha}S_{\alpha}$.
	By virtue of Painlev\'e --Kuratowski definition it follows that $x\in \Lim_{\alpha}S_{\alpha}$. Thus $\cl\bigcup_{\alpha\in A}S_{\alpha}\subseteq \Lim_{\alpha}S_{\alpha}$. Now let $x\in \Lim_{\alpha}S_{\alpha}$ then again by definition of Painlev\'e --Kuratowski limit we have $x\in\Liminf_{\alpha}S_{\alpha}$.
	This means that for all $V\in\mathcal{N}(x)$ we have $V\cap\bigcup_{\alpha\in A}S_{\alpha}\neq\emptyset$ implying $x\in \cl\bigcup_{\alpha\in A}S_{\alpha}$. Consequently $\Lim_{\alpha}S_{\alpha}\subseteq \cl\bigcup_{\alpha\in A}S_{\alpha}$. Therefore $\Lim_{\alpha}S_{\alpha}= \cl\bigcup_{\alpha\in A}S_{\alpha}$.
	%
\end{proof}

\section{Characterization results for locally compact spaces}
\label{s:characterization}

We say a complete geodesic metric space $(X,d)$ is regular if for any bounded nondecreasing chain of compact sets $(K_{\alpha})_{\alpha\in A}$ in $X$, $\Lim_{\alpha}K_{\alpha}$ is compact.

\begin{thm}
	\label{th:regular}A complete geodesic metric space $(X,d)$ is locally compact if and only if it is regular.
\end{thm}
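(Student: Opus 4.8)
The plan is to prove both implications. The easy direction is "locally compact $\Rightarrow$ regular": if $(X,d)$ is a complete locally compact geodesic space, then by the Hopf--Rinow theorem closed bounded sets are compact. Given a bounded nondecreasing chain of compact sets $(K_\alpha)_{\alpha\in A}$, Lemma \ref{Painleve1} gives $\Lim_\alpha K_\alpha=\cl\bigcup_{\alpha\in A}K_\alpha$. This set is closed by construction, and it is bounded since all $K_\alpha$ lie in a common ball (boundedness of the chain). Hence it is compact, so $X$ is regular.

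The substantive direction is "regular $\Rightarrow$ locally compact". I would argue by contraposition: suppose $X$ is \emph{not} locally compact, and produce a bounded nondecreasing chain of compact sets whose Painlev\'e--Kuratowski limit fails to be compact. Failure of local compactness means there is a point $x_0\in X$ such that no closed ball $\mathbb B[x_0,r]$ is compact; fix such an $r>0$ with $\mathbb B[x_0,r]$ closed, bounded, but not compact. A closed subset of a complete metric space is complete, so $\mathbb B[x_0,r]$ is complete but not compact, hence not totally bounded: there is $\varepsilon>0$ and an infinite $\varepsilon$-separated set $\{y_1,y_2,\dots\}\subseteq \mathbb B[x_0,r]$. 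Now set $F_n\coloneqq\{y_1,\dots,y_n\}$, each a finite (hence compact) set, and let $K_n$ be something guaranteed compact and monotone built from these — the cleanest choice is $K_n=F_n$ itself, indexed by $\mathbb N$, which is already a nondecreasing chain of compact sets contained in the bounded set $\mathbb B[x_0,r]$. By Lemma \ref{Painleve1}, $\Lim_n K_n=\cl\bigcup_n F_n=\cl\{y_1,y_2,\dots\}$, and this set contains the infinite $\varepsilon$-separated sequence $(y_n)$, which has no convergent subsequence; therefore the limit is not (sequentially, hence not) compact, contradicting regularity. This shows regular $\Rightarrow$ locally compact.

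One subtlety to get right: the argument needs $\cl\{y_1,y_2,\dots\}$ to genuinely be noncompact, which is immediate in a metric space because the $y_n$ form an $\varepsilon$-separated sequence with no Cauchy subsequence, so no convergent subsequence exists, and a metric space is compact iff sequentially compact. Another point worth a sentence: the chain must be \emph{bounded}, which holds because every $F_n\subseteq\mathbb B[x_0,r]$. I do not expect to need the geodesic or $\CAT(0)$ structure for this theorem at all — only completeness and the metric topology enter, together with Hopf--Rinow for the forward direction. The main (though modest) obstacle is simply the correct bookkeeping: extracting the right "witness to non-compactness" from the negation of local compactness — i.e., passing from "some ball is not compact" to "some ball is not totally bounded" via completeness of closed subsets — and then checking that the Painlev\'e--Kuratowski limit of the resulting chain, computed via Lemma \ref{Painleve1}, is exactly the non-totally-bounded closed set we started with.
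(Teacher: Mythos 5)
Your proof is correct, and for the substantive direction (regular $\Rightarrow$ locally compact) it takes a genuinely different route from the paper. The paper argues directly: it fixes a closed ball $\mathbb B[x,r]$, considers the family $\mathcal K$ of all compact subsets of that ball ordered by inclusion, uses regularity to show every chain in $\mathcal K$ has an upper bound (namely its Painlev\'e--Kuratowski limit, compact by hypothesis), invokes Zorn's Lemma to produce a maximal compact set $K^{**}\subseteq\mathbb B[x,r]$, and then shows $K^{**}$ must be the whole ball. You instead argue by contraposition: failure of local compactness gives a closed ball that is complete but not compact, hence not totally bounded, hence containing an infinite $\varepsilon$-separated sequence $(y_n)$; the finite truncations $K_n=\{y_1,\dots,y_n\}$ form a bounded nondecreasing chain of compact sets whose limit $\cl\{y_1,y_2,\dots\}$ (via Lemma \ref{Painleve1}) is not compact, contradicting regularity. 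Your route is more elementary (no Zorn's Lemma, only the greedy extraction of a separated sequence) and in fact proves a formally stronger statement: regularity restricted to $\mathbb N$-indexed chains already forces local compactness, whereas the paper's Zorn argument needs upper bounds for chains of arbitrary cardinality and so uses the full hypothesis. What the paper's direct approach buys is an explicit exhibition of the ball as a maximal compact set, but your bookkeeping --- passing from ``no compact neighborhood'' to ``some closed ball is not totally bounded'' via completeness of closed subsets, and checking that an $\varepsilon$-separated set has no convergent subsequence --- is all sound, and your forward direction coincides with the paper's (Hopf--Rinow plus Lemma \ref{Painleve1}).
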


\begin{proof}
	Let $(X,d)$ be a complete geodesic metric space that is locally compact and let $(A,\preceq)$ be some directed set. Let $(K_{\alpha})_{\alpha\in A}$ be a bounded nondecreasing chain of compact sets. By Lemma \ref{Painleve1} $\Lim_{\alpha}K_{\alpha}=\cl\bigcup_{\alpha\in A}K_{\alpha}$. Hence $\Lim_{\alpha}K_{\alpha}$ is a closed and bounded subset of $(X,d)$. Since $X$ is locally compact by Hopf--Rinow Theorem it follows that $\Lim_{\alpha}K_{\alpha}$ is compact. The nondecreasing chain of compact sets $(K_{\alpha})_{\alpha\in A}$ was arbitrary, thus $(X,d)$ is regular.
	
	Now we show the other direction. Suppose $(X,d)$ is regular.
	Let $x\in X$ be arbitrary and $\mathbb{B}[x,r]$ the closed geodesic ball centered at $x$ with radius $r>0$. 
	Denote by 
	$\mathcal{K}\coloneqq\{K\subseteq \mathbb{B}[x,r]: K\,\text{is compact}\}$.
	Clearly $\mathcal{K}$ is nonempty since any finite set of points in $\mathbb{B}[x,r])$ is a compact set. Let $(K_{\alpha})_{\alpha\in A}$ be a bounded nondecreasing chain in $\mathcal{K}$. By Lemma \ref{Painleve1} it follows that the limit $K^*\coloneqq\Lim_{\alpha}K_{\alpha}$ exists. 
	Assumption that $(X,d)$ is regular implies that $K^*$ is a compact set and hence $K^*\in\mathcal{K}$. Define a partial order on $\mathcal{K}$ by $K_1\preceq K_2$ if and only if $K_1\subseteq K_2$. For every chain $(K_{\beta})_{\beta\in B}$ in $\mathcal{K}$ we have $K_{\beta}\preceq K^*$, hence every chain is bounded in $\mathcal{K}$. Zorn's Lemma implies that $\mathcal{K}$ has at least one maximal element $K^{**}$. 
	We claim that $K^{**}$ concides with $\mathbb{B}[x,r]$. If not then there is an element $y\in\mathbb{B}[x,r]\setminus K^{**}$. Construct $K'_{\beta}\coloneqq
	K_{\beta}\cup\{y\}$ where $(K_{\beta})_{\beta\in B}$ is an arbitrary chain of compact sets in $\mathbb{B}[x,r]$. Clearly $y\in K'_{\beta}$ and $K'_{\beta}\in\mathcal{K}$ for all $\beta\in B$.
	Let $(K')^*\coloneqq\Lim_{\beta}K'_{\beta}$ then maximality of $K^{**}$ requires that $(K')^*\preceq K^{**}$. But this is impossible since by construction $y\in(K')^*$ and $y\notin K^{**}$. 
	Therefore $K^{**}=\mathbb{B}[x,r]$ which in turn yields that $\mathbb{B}[x,r]\in\mathcal{K}$ and hence $\mathbb{B}[x,r]$ is compact. For any $r'<r$ the open geodesic ball $\mathbb{B}(x,r')$ is entirely contained in $\mathbb{B}[x,r]$. Because $x\in X$ was arbitrary then $(X,d)$ must be locally compact.
\end{proof}

\begin{corollary}
	\label{c:regular}
	A complete $\CAT(0)$ space is locally compact if and only if it is regular.
\end{corollary}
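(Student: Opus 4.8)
The plan is to observe that Corollary \ref{c:regular} is nothing more than the $\CAT(0)$ instance of Theorem \ref{th:regular}, so the entire content has already been established. The only thing that needs checking is that a complete $\CAT(0)$ space lies in the class of spaces covered by that theorem, namely the complete geodesic metric spaces. This is immediate: by definition a $\CAT(0)$ space is geodesic (and, via inequality \eqref{eq:quadratic}, in fact uniquely geodesic), and completeness is part of the hypothesis. Hence the hypotheses of Theorem \ref{th:regular} hold verbatim, and the equivalence ``locally compact $\iff$ regular'' carries over without modification.

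Concretely, for one direction I would take a complete $\CAT(0)$ space $X$ that is locally compact, note it is a complete geodesic metric space, and apply Theorem \ref{th:regular} to obtain regularity; for the converse I would take $X$ regular, again observe it is a complete geodesic metric space, and apply Theorem \ref{th:regular} to obtain local compactness. There is no genuine obstacle here: all the work---the Hopf--Rinow argument, the Zorn's Lemma construction of a maximal compact subset of a closed ball, and Lemma \ref{Painleve1}---is already carried out in the proof of Theorem \ref{th:regular}. The corollary is stated separately only because the $\CAT(0)$ setting is the one used throughout the rest of the paper, and because in that setting the closedness and convexity of geodesic balls make the statement especially natural to record.
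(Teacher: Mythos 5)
Your proposal is correct and matches the paper exactly: the corollary is left without a separate proof precisely because a complete $\CAT(0)$ space is by definition a complete geodesic metric space, so Theorem \ref{th:regular} applies verbatim. Nothing further is needed.
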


\begin{lemma}
	\label{l:claims}Let $(X,d)$ be a metric space and $K\subseteq X$ be a compact convex set.  Let $\mathcal{F}_K$ be the collection of all finite subsets $S$ contained in $K$. Then 
	there is an increasing sequence of sets $S_1\subseteq S_2\subseteq...$ in $\mathcal{F}_K$ such that 
	\begin{equation}
	\label{claimK2}
	K=\cl\bigcup_{n\in\mathbb{N}}\co S_n.
	\end{equation}
\end{lemma}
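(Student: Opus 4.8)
The plan is to exploit the separability of the compact metric space $K$ to produce a countable dense subset, and then to build the increasing sequence $S_n$ by accumulating finitely many of these dense points at each stage. First I would recall that a compact metric space is separable, so there is a countable set $D=\{d_1,d_2,\dots\}$ with $\cl D=K$. Define $S_n\coloneqq\{d_1,\dots,d_n\}$. Then $S_1\subseteq S_2\subseteq\cdots$ is an increasing sequence in $\mathcal F_K$ (each $S_n$ is finite and contained in $K$), and I claim $K=\cl\bigcup_{n\in\mathbb N}\co S_n$.

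For the inclusion $\supseteq$: since $K$ is convex and each $S_n\subseteq K$, we have $\co S_n\subseteq K$ for every $n$; hence $\bigcup_n\co S_n\subseteq K$, and because $K$ is compact it is closed, so $\cl\bigcup_n\co S_n\subseteq K$. For the inclusion $\subseteq$: every point of $D$ lies in some $S_n$, hence in $\co S_n$ (a singleton is trivially in its own convex hull, $x=\co\{x\}$), so $D\subseteq\bigcup_n\co S_n\subseteq\cl\bigcup_n\co S_n$. Taking closures and using $\cl D=K$ gives $K\subseteq\cl\bigcup_n\co S_n$. Combining the two inclusions yields \eqref{claimK2}.

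I should note that convexity of $K$ is genuinely used (and is exactly what lets the argument work without any $\CAT(0)$ structure): it guarantees $\co S_n\subseteq K$, which is what makes the $\supseteq$ direction true. The only mild subtlety — the step I would be most careful about — is the separability claim: it is standard that a compact metric space is totally bounded, hence second countable, hence separable, so a countable dense $D$ exists; alternatively one can take, for each $m\in\mathbb N$, a finite $1/m$-net of $K$ (which exists by total boundedness) and let $D$ be the union of all these nets, which is countable and dense. Everything else is formal set-theoretic manipulation with closures and unions, so there is no real obstacle beyond invoking separability correctly.
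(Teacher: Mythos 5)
Your proof is correct and follows essentially the same route as the paper's: take a countable dense subset of the compact (hence separable) set $K$, let $S_n$ be its first $n$ points, use convexity of $K$ to get $\co S_n\subseteq K$, and use density for the reverse inclusion. Nothing to add.
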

\begin{proof}
	Let $K$ be a compact convex set. Then $K$ is separable, i.e. $K$ contains a countable dense subset $S\coloneqq\{x_1, x_2,..., x_n,...\}$. Let $S_n\coloneqq\{x_1, x_2,..., x_n\}$ then $(S_n)_{n\in\mathbb{N}}$ is an increasing sequence in $\mathcal{F}_K$. It follows that $\co S_n\subseteq K$ since $K$ is convex. This in turn yields 
	$$\cl\bigcup_{n\in\mathbb{N}}\co S_n\subseteq K.$$
	On the other hand we have the immediate inclusions
	$$S_i\subseteq \bigcup_{n\in\mathbb{N}}\co S_n,\quad \forall i\in\mathbb{N}$$
	implying 
	$$\cl S\subseteq \cl\bigcup_{n\in\mathbb{N}}\co S_n.$$
	Because $S$ is dense in $K$ then the equation
	$K=\cl S$ implies identity \eqref{claimK2}.
\end{proof}

For a given metric space $(X,d)$ and two bounded sets $A,B$ in $X$ let $d_H(A,B)$ denote the {\em Hausdorff distance} between $A$ and $B$ defined as 
\begin{equation}
\label{Hausdorffdist}
d_H(A,B)\coloneqq\max\{\sup_{y\in A}\inf_{x\in B}d(x,y), \sup_{x\in B}\inf_{y\in A}d(x,y)\}.
\end{equation}
It is known that $d_H(\cdot,\cdot)$ is a {\em pseudometric} on the set of bounded sets of a metric space $(X,d)$ (see \cite{Banas}). When $X$ is compact it is known that Painlev\'e--Kuratowski convergence coincides with Hausdorff distance convergence (see \cite{Kuratowski}, \cite[ Corollary 5.1.11]{Beer}).

We say a $\CAT(0)$ space $(X,d)$ satisfies the {\em finite set property} if for every $x\in X$ there exists an open set $U$ containing $x$ and a finite set $S$ such that $U\subseteq\cl\co S$.

\begin{thm}
	\label{finiteset}
	Let $(X,d)$ be a complete $\CAT(0)$ space satisfying the geodesic extension property. Then
	$(X,d)$ is locally compact if and only if $(X,d)$ satisfies the finite set property and for every compact set $K$ the closure of its convex hull $\cl\co K$ is compact.
\end{thm}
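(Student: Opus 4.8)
The plan is to prove the two implications separately. The implication ``finite set property and compactness of closed convex hulls $\Rightarrow$ locally compact'' is short: given $x\in X$, the finite set property yields an open $U\ni x$ and a finite set $S$ with $U\subseteq\cl\co S$; since $S$ is compact, the hypothesis makes $\cl\co S$ compact, so $\cl U$, being closed in $\cl\co S$, is a compact neighbourhood of $x$. As $x$ was arbitrary, $X$ is locally compact. This direction uses neither the $\CAT(0)$ geometry nor geodesic extension.

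For the converse, assume $X$ is locally compact. The compactness of $\cl\co K$ for compact $K$ is the Hopf--Rinow argument already sketched in the Introduction: $K$ is bounded, hence contained in some closed ball $\mathbb B[x_0,r]$, which is convex in a $\CAT(0)$ space, so $\cl\co K\subseteq\mathbb B[x_0,r]$; and $\mathbb B[x_0,r]$ is compact by Hopf--Rinow (completeness plus local compactness), so its closed subset $\cl\co K$ is compact. The substantive part is the finite set property. Fix $x\in X$ and, using local compactness, pick $\rho>0$ with $\mathbb B[x,\rho]$ compact; set $\eta:=\rho/4$. By total boundedness choose a finite $\eta$-net $F\subseteq\mathbb B[x,\rho]$, so that every point of $\mathbb B[x,\rho]$ is within $\eta$ of $F$. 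Put $C:=\cl\co F$; this set is closed and convex (closures of convex sets in $\CAT(0)$ spaces are convex), and since $F\subseteq C$ we get $d(u,C)\le\eta$ for all $u\in\mathbb B[x,\rho]$. I claim $\mathbb B(x,\rho/2)\subseteq C$; this finishes the proof, with $U=\mathbb B(x,\rho/2)$ and $S=F$. (Alternatively one may obtain such an $F$ as one of the finite sets $S_N$ produced by Lemma \ref{l:claims} applied to $\mathbb B[x,\rho]$, since by Lemma \ref{Painleve1} the sets $\cl\co S_n$ increase to $\mathbb B[x,\rho]$ and Painlev\'e--Kuratowski convergence agrees with Hausdorff convergence on the compact set $\mathbb B[x,\rho]$.)

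The claim is where the geodesic extension property enters. Suppose $z\in\mathbb B(x,\rho/2)$ with $z\notin C$. Since $X$ is complete $\CAT(0)$ and $C$ is closed convex, $c:=P_Cz$ is well defined and $\delta_0:=d(z,c)=d(z,C)>0$. Using the geodesic extension property, extend the geodesic $[c,z]$ beyond $z$ to a point $z'$ with $z\in[c,z']$ and $d(z,z')=\rho/2$. Then $d(x,z')\le d(x,z)+d(z,z')<\rho$, so $z'\in\mathbb B(x,\rho)$ and hence $d(z',C)\le\eta=\rho/4$. On the other hand, the nearest-point projection onto a closed convex set in a $\CAT(0)$ space is monotone along geodesics emanating from the set: since $c=P_Cz$ and $z\in[c,z']$, one has $P_Cz'=c$, which follows from \eqref{eq:quadratic} applied to the triangle $\Delta(c,z',y)$ for $y\in C$, together with \eqref{eq:projectionsineq}. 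Therefore $d(z',C)=d(z',c)=d(z',z)+d(z,c)=\rho/2+\delta_0>\rho/2$, contradicting $d(z',C)\le\rho/4$. Hence $z\in C$, proving the claim, and with it the finite set property.

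The main obstacle is exactly this last step, and specifically the monotonicity identity $P_Cz'=c$ that lets one ``push'' a point outside $C$ to a point whose distance to $C$ overshoots the $\eta$-approximation bound. This is the only place geodesic extension is used, and it is genuinely needed: the closed unit disc in $\mathbb R^2$ is a compact (hence locally compact) complete $\CAT(0)$ space without the geodesic extension property, yet for a boundary point $x$ no finite $S$ has $x$ in the relative interior of $\cl\co S$, so the finite set property fails there. The remaining verifications---compactness of closed balls via Hopf--Rinow, convexity of closures of convex sets, existence and uniqueness of $P_C$, and compactness of $\cl U$ in the easy direction---are routine.
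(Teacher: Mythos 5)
Your proof is correct, and while the easy converse and the Hopf--Rinow argument for $\cl\co K$ coincide with the paper's, your treatment of the substantive implication (local compactness $\Rightarrow$ finite set property) takes a genuinely different and more direct route. The paper argues by contradiction: assuming the property fails at $x$, it invokes Lemma \ref{l:claims} to produce increasing finite sets $S_n$ with $\cl\co S_n$ converging in Hausdorff distance to a compact convex neighbourhood $K$ of $x$, then runs a limiting argument (sequences $y_k\to x$, projections onto $\cl\co S_n$, geodesic extensions out to $\partial K$, a convergent subsequence $z_{k_m}\to z$ with $P_{\cl\co S_n}z=x$) to force $d_H(K,\cl\co S_n)\ge d(x,\partial K)>0$, contradicting $x\in\inter K$. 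You instead prove a clean quantitative statement: any finite $(\rho/4)$-net $F$ of a compact ball $\mathbb B[x,\rho]$ already satisfies $\mathbb B(x,\rho/2)\subseteq\cl\co F$, because a point $z$ of the half-ball outside $C=\cl\co F$ could be pushed along the extended geodesic through $P_Cz$ and $z$ to a point $z'$ still in $\mathbb B(x,\rho)$ but with $d(z',C)>\rho/2$. Your key identity $P_Cz'=P_Cz$ does follow from \eqref{eq:quadratic} applied to $\Delta(P_Cz,z',y)$ combined with \eqref{eq:projectionsineq}, as you indicate; it is in fact the same monotonicity of the projection that the paper uses implicitly when it asserts $P_{\cl\co S_n}z_k=P_{\cl\co S_n}y_k$. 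What your approach buys is the elimination of Lemma \ref{l:claims}, the Painlev\'e--Kuratowski/Hausdorff convergence apparatus, and all subsequence extractions, together with an explicit radius for the neighbourhood $U$; what the paper's approach shows in addition is that for \emph{any} increasing dense family of finite subsets of a compact convex neighbourhood, the hulls $\cl\co S_n$ eventually capture a neighbourhood of each interior point. Your closing remark that the closed unit disc is locally compact yet fails the finite set property correctly isolates where the geodesic extension hypothesis is indispensable.
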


\begin{proof}
	Let $(X,d)$ be a locally compact complete $\CAT(0)$ space. If $K\subseteq X$ is a compact set then it is evident by arguments in Section \ref{s:intro} that $\cl\co K$ is compact. Now suppose that there is some $x\in X$ such that for any open set $U$ containing $x$ there is no finite set $S$ such that $U\subseteq\cl\co S$. Since $(X,d)$ is locally compact by definition there is an open set $V$ containing $x$ and a compact set $K$ such that $V\subseteq K$. Suppose $K$ is convex, else take $\cl\co K$ which is also compact by the first implication. Consider any finite $S$ in $K$ containing $x$. 
	For each such finite set $S$ one can find a sequence $(x_n)_{n\in\mathbb N}$ such that $x_n\in \mathbb{B}(x,\varepsilon_n)\setminus\cl\co S$ where $\mathbb{B}(x,\varepsilon_n)\subset V$ and $\lim_{n\to+\infty}\varepsilon_n=0$. Clearly $(x_n)_{n\in\mathbb{N}}\subset X\setminus\cl\co S$. Then $\lim_{n\to+\infty}x_n=x$ implies $x\in\cl(X\setminus\cl\co S)$. The obvious inclusion $x\in\cl\co S$ yields that $x$ is in the boundary of $\cl\co S$ for any finite set $S$ in $K$ containing $x$. Since $K$ is compact by Lemma \ref{l:claims} it follows that there is a collection of increasing sets $(S_n)_{n\in\mathbb{N}}$ such that 
	\begin{equation}
	\label{eq:identity}
	K=\cl\bigcup_{n\in\mathbb{N}}\co S_n
	\end{equation}
	Since $K$ is closed and convex the inclusions $\cl\co S_n\subseteq K$ for all $n\in\mathbb{N}$ imply 
	\begin{equation}
	\label{eq:equivalent}
	K=\cl\bigcup_{n\in\mathbb{N}}\cl\co S_n.
	\end{equation}
	Assume without loss of generality that $x\in S_n$ for all $n\in\mathbb{N}$, else we can always add $x$ to $S_n$ and obtain a new sequence of increasing sets satisfying identities \eqref{eq:identity} and \eqref{eq:equivalent}. By above arguments it follows that $x$ is in the boundary of each $\cl\co S_n$. Moreover in view of Lemma \ref{Painleve1} we have $\lim_{n\to+\infty}\cl\co S_n=K$ and subsequently $\lim_{n\to+\infty}d_H(K,\cl\co S_n)=0$ since $K$ is compact.
	The inclusion $\cl\co S_n\subseteq K$ and definition \eqref{Hausdorffdist} yield 
	$$d_H(K,\cl\co S_n)=\sup_{y\in K}\inf_{z\in\cl\co S_n}d(z,y).$$
	Because $\cl\co S_n$ is a closed convex set then for each $y\in K$ its projection $P_{\cl\co S_n}y$ onto $\cl\co S_n$ exists and it is unique, therefore we obtain
	$$d_H(K,\cl\co S_n)=\sup_{y\in K}d(y,P_{\cl\co S_n}y).$$
	Let $\partial K$ denote the boundary of $K$. Then $\partial K\subseteq K$ implies
	$$\sup_{y\in K}d(y,P_{\cl\co S_n}y)\geq \sup_{y\in \partial K}d(y,P_{\cl\co S_n}y)$$ and thus
	\begin{equation}
	\label{eq:technical}
	d_H(K,\cl\co S_n)\geq \sup_{y\in \partial K}d(y,P_{\cl\co S_n}y).
	\end{equation}
	There exists a sequence $(y_k)_{k\in\mathbb N}\subseteq K\setminus\{x\}$ such that $\lim_{k\to+\infty}y_k=x$. Let $P_{\cl\co S_n}y_k$ denote the metric projection of $y_k$ onto $\cl\co S_n$ for every $k\in\mathbb{N}$. Denote by $\gamma_k:[0,1]\to X$ the geodesic segment connecting $P_{\cl\co S_n}y_k$ with $y_k$. By assumption $(X,d)$ satisfies the geodesic extension property. Therefore there exists geodesic lines $\widetilde{\gamma}_k:\mathbb{R}\to X$ such that $\widetilde{\gamma}_k\mid_{[0,1]}=\gamma_k$ for every $k\in\mathbb{N}$. Since $K$ is bounded and the image of $\widetilde{\gamma}_k$ is connected then there exists $z_k\in \widetilde{\gamma}_k\cap\partial K$ for every $k\in\mathbb{N}$. From the equation $P_{\cl\co S_n}z_k=P_{\cl\co S_n}y_k$ for all $k\in\mathbb{N}$ we obtain the inequalities
	$$d(z_k,y)^2\geq d(z_k,P_{\cl\co S_n}z_k)^2+d(P_{\cl\co S_n}z_k,y)^2,\quad\forall k\in\mathbb{N},\; \forall y\in\cl\co S_n.$$
	Note that $$\lim_{k\to+\infty}d(P_{\cl\co S_n}z_k,x)=\lim_{k\to+\infty} d(P_{\cl\co S_n}y_k,x)\leq \lim_{k\to+\infty}d(y_k,x)=0$$ implies that $\lim_{k\to+\infty}P_{\cl\co S_n}z_k=x$.
	Since $(z_k)_{k\in\mathbb{N}}\subseteq \partial K$ and $\partial K$ is compact then there is a subsequence $(z_{k_m})_{m\in\mathbb{N}}\subseteq(z_k)_{k\in\mathbb{N}}$ converging to some element $z\in \partial K$. Passing in the limit we obtain 
	\begin{align*}
	\lim_{m\to+\infty}d(z_{k_m},y)^2&\geq \lim_{m\to+\infty}d(z_{k_m},P_{\cl\co S_n}z_{k_m})^2+\lim_{m\to+\infty}d(P_{\cl\co S_n}z_{k_m},y)^2\\&\Rightarrow d(z,y)^2\geq d(z,x)^2+d(x,y)^2,\quad\forall y\in\cl\co S_n. 
	\end{align*}
	In particular when $y=P_{\cl\co S_n}z$ we get 
	$$d(z,P_{\cl\co S_n}z)^2\geq d(z,x)^2+d(x,P_{\cl\co S_n}z)^2.$$
	On the other hand by the property of the projection onto a closed convex set we have the inequality
	$$d(z,x)^2\geq d(z,P_{\cl\co S_n}z)^2+d(x,P_{\cl\co S_n}z)^2.$$
	Together these last two inequalities imply $d(x,P_{\cl\co S_n}z)\leq 0$ and hence $x=P_{\cl\co S_n}z$. From \eqref{eq:technical} it follows then 
	\begin{align*}
	d_H(K,\cl\co S_n)&\geq \sup_{y\in \partial K}d(y,P_{\cl\co S_n}y)\\&\geq d(z,P_{\cl\co S_n}z)=d(z,x)\geq \inf_{u\in\partial K}d(u,x)=d(x,\partial K).
	\end{align*}
	In the limit we obtain $\lim_{n\to+\infty}d(x,\partial K)\leq \lim_{n\to+\infty}d_H(K,\cl\co S_n)=0$ or equivalently $x\in\partial K$. However this is impossible since $x\in\inter K$.
	
	Now we show the opposite direction.
	Suppose that for every compact set $K$ the closure of its convex hull $\cl\co K$ is compact and that $(X,d)$ satisfies the finite set property. For every $x\in X$ there is an open set $U$ containing $x$ and a finite set $S$ such that $U\subseteq \cl\co S$. But $S$ is finite and therefore compact which in turn yields that $\cl\co S$ is compact. By definition of local compactness it follows that $(X,d)$ is a locally compact space.
\end{proof}

\section{Threading of a set}
\label{s:threading}

\subsection{Main properties of threading}
For $S\subseteq X$ the \textit{threading} of $S$ denoted by $\thr S$ is defined to be the union of all geodesic segments with both endpoints in $S$.  
It follows from this definition that if $z\in \thr S$ then there are $x,y\in S$ such that $z\in[x,y]$. In general $[x',y']\subseteq \thr S$ if and only if there are $x,y\in S$ such that $[x',y']\subseteq[x,y]$. One can then iteratively define threading of threading of a set and so on. Given a set $S$ we have the following chain of inclusions $S\subseteq \thr S\subseteq\thr^2S\subseteq...\subseteq\thr^nS\subseteq...$ and the equation $\thr^nS=\thr(\thr^{n-1}S)$ for all $n\in\mathbb{N}$ is immediate. 
\begin{proposition}
	\label{thrproperties}
	In general the following rules hold
	\begin{align}
	&\thr(S_1\cap S_2)\subseteq\thr S_1\cap \thr S_2\label{eq:id1}\\
	&\thr S_1\cup\thr S_2\subseteq\thr(S_1\cup S_2)\label{eq:id2}
	\end{align}
	with equality in both if $S_1\subseteq S_2$ or $S_2\subseteq S_1$.
\end{proposition}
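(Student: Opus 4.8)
The plan is to unwind the definition of $\thr$ directly, the only non-tautological ingredient being the elementary \emph{monotonicity} of threading. First I would record that observation: if $A\subseteq B$, then every geodesic segment with both endpoints in $A$ also has both endpoints in $B$, whence $\thr A\subseteq\thr B$. (Here I use that in a $\CAT(0)$ space geodesics are unique, by \eqref{eq:quadratic}, so that "$z\in[x,y]$ for some $x,y\in S$" is an unambiguous characterization of membership in $\thr S$.) Everything else is membership chasing.

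For \eqref{eq:id1} I would take $z\in\thr(S_1\cap S_2)$, pick $x,y\in S_1\cap S_2$ with $z\in[x,y]$, and note that since $x,y\in S_1$ we get $z\in\thr S_1$, and since $x,y\in S_2$ we get $z\in\thr S_2$; hence $z\in\thr S_1\cap\thr S_2$. For \eqref{eq:id2} I would take $z\in\thr S_1\cup\thr S_2$, say $z\in\thr S_i$ for some $i\in\{1,2\}$, choose $x,y\in S_i$ with $z\in[x,y]$, and observe that $x,y\in S_1\cup S_2$, so $z\in\thr(S_1\cup S_2)$.

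For the equality statements, by symmetry I may assume $S_1\subseteq S_2$, so that $S_1\cap S_2=S_1$ and $S_1\cup S_2=S_2$, and monotonicity gives $\thr S_1\subseteq\thr S_2$. Then $\thr S_1\cap\thr S_2=\thr S_1=\thr(S_1\cap S_2)$ and $\thr S_1\cup\thr S_2=\thr S_2=\thr(S_1\cup S_2)$, which upgrades both inclusions \eqref{eq:id1} and \eqref{eq:id2} to equalities.

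I do not expect any real obstacle: the statement is a formal consequence of the definition of $\thr$ as a union of geodesic segments. The only point requiring any care is ensuring that the segment $[x,y]$ is well defined so the characterization of $\thr S$ is unambiguous, which is automatic in the $\CAT(0)$ setting; beyond that the argument is pure set-theoretic bookkeeping.
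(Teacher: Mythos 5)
Your proof is correct and follows essentially the same route as the paper: direct membership chasing from the definition of $\thr$ for both inclusions, with the equality case reduced to $S_1\cap S_2=S_1$, $S_1\cup S_2=S_2$ under the inclusion hypothesis (the paper simply calls this step ``evident,'' while you spell out the monotonicity $\thr S_1\subseteq\thr S_2$, which the paper only records afterwards as a remark). No gaps.
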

\begin{proof}
	Let $z\in \thr(S_1\cap S_2)$ then by definition of threading we have $x,y\in S_1\cap S_2$ such that $z\in [x,y]$. But $x,y\in S_1\cap S_2$ implies $x,y\in S_1$ and $x,y\in S_2$. Therefore $[x,y]\subseteq \thr S_1$ and $[x,y]\subseteq\thr S_2$ and so $z\in\thr S_1\cap\thr S_2$. Then
	identity \eqref{eq:id1} follows. Now let $z\in\thr S_1\cup \thr S_2$ then $z\in \thr S_1$ or $z\in \thr S_2$. There are $x_1,y_1\in S_1$ and $x_2,y_2\in S_2$ such that $z\in[x_1,y_1]$ or $z\in[x_2,y_2]$. By construction since $x_1,y_1,x_2,y_2\in S_1\cup S_2$ then $[x_1,y_1],[x_2,y_2]\subseteq \thr(S_1\cup S_2)$ implies $z\in \thr(S_1\cup S_2)$.
	This proves identity \eqref{eq:id2}. 
	If $S_1\subseteq S_2$ or $S_2\subseteq S_1$ the equalities are evident in both identities. 
\end{proof}
In general one can find sets $S_1, S_2$ such that \eqref{eq:id1} or \eqref{eq:id2} hold with strict inclusion. Consider the simplest example of $X=\mathbb{R}$ equipped with the usual metric $d(r,s)=|r-s|$ for any two real numbers $r,s\in\mathbb{R}$. Take $S_1\coloneqq\{0,2\}, S_2=\{1\}$. Then $S_1\cap S_2=\emptyset$ implies 
$\thr(S_1\cap S_2)=\emptyset\subsetneq \{1\}=[0,2]\cap\{1\}=\thr S_1\cap\thr S_2$.  Similarly for the other identity let $S_2=\{3\}$ then $\thr S_1\cup \thr S_2=[0,2]\cup\{3\}\subsetneq[0,3]=\thr(S_1\cup S_2)$. Moreover from relation \eqref{eq:id1} it follows that $\thr S_1\subseteq\thr S_2$ whenever $S_1\subseteq S_2$. Indeed $S_1\subseteq S_2$ implies that $\thr S_1=\thr(S_1\cap S_2)\subseteq\thr S_1\cap \thr S_2\subseteq\thr S_2$.

\begin{thm}
	\label{thrcvxthm}
	If $S\subseteq X$ is convex then $\thr S=S$. In general for any set $S\subseteq X$ we have the following identity
	\begin{equation}
	\label{eq:cvxid}   
	\bigcup_{n\in\mathbb{N}}\thr^nS=\co S.
	\end{equation}
\end{thm}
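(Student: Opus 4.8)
The plan is to reduce everything to two soft observations: a convex set is a fixed point of $\thr$, and $\thr$ is monotone under inclusion (the monotonicity being already recorded just after Proposition \ref{thrproperties}, namely $S_1\subseteq S_2\Rightarrow\thr S_1\subseteq\thr S_2$).

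First I would dispatch the convex case. The inclusion $S\subseteq\thr S$ is automatic, since $x\in[x,x]$ for each $x\in S$; for the reverse, any $z\in\thr S$ lies on a segment $[x,y]$ with $x,y\in S$, and convexity of $S$ gives $[x,y]\subseteq S$, hence $z\in S$. So $\thr S=S$ whenever $S$ is convex, and in particular, since $\co S$ is convex, iterating yields $\thr^n(\co S)=\co S$ for every $n\in\mathbb N$.

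For the identity \eqref{eq:cvxid} I would prove the two inclusions separately. For the inclusion ``$\subseteq$'': from $S\subseteq\co S$ and monotonicity of $\thr$, an immediate induction gives $\thr^nS\subseteq\thr^n(\co S)=\co S$ for every $n$, and taking the union yields $\bigcup_{n\in\mathbb N}\thr^nS\subseteq\co S$. For the inclusion ``$\supseteq$'': set $T\coloneqq\bigcup_{n\in\mathbb N}\thr^nS$. Then $S\subseteq\thr S\subseteq T$, so it suffices to show that $T$ is convex, because $\co S$ is the smallest convex set containing $S$. Given $x,y\in T$, say $x\in\thr^mS$ and $y\in\thr^kS$ with (without loss of generality) $m\le k$; using the nested chain $\thr^mS\subseteq\thr^kS$ both points lie in $\thr^kS$, so by the very definition of threading $[x,y]\subseteq\thr(\thr^kS)=\thr^{k+1}S\subseteq T$. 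Hence $T$ is convex, so $\co S\subseteq T$, and combining with ``$\subseteq$'' gives \eqref{eq:cvxid}.

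I do not expect a genuine obstacle here: the whole argument is order-theoretic, with no metric estimates. The only points needing a moment's care are the use of the inclusion chain $S\subseteq\thr S\subseteq\thr^2S\subseteq\cdots$ to collect two arbitrary elements of $T$ into a single iterate $\thr^kS$, and the appeal to the characterization of $\co S$ as the least convex superset of $S$ — both standard and already available.
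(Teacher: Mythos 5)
Your proposal is correct and follows essentially the same route as the paper: the same fixed-point argument for convex sets, the same nested-chain argument showing $\bigcup_n\thr^nS$ is convex (hence contains $\co S$ by minimality), and the same use of monotonicity of $\thr$ (which the paper re-derives via $\thr(S\cap\co S)\subseteq\thr S\cap\thr\co S$) to get $\thr^nS\subseteq\co S$. No gaps.
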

\begin{proof}
	The inclusion $S\subseteq\thr S$ is clear whether $S$ is convex or not. Now let $z\in \thr S$. By definition of threading 
	there are $x,y\in S$ such that $z\in[x,y]$. Assumption $S$ is convex implies $[x,y]\subseteq S$ therefore $z\in S$ which in turn yields $\thr S\subseteq S$. To prove identity \eqref{eq:cvxid} let $x,y\in \bigcup_{n\in\mathbb{N}}\thr^nS$ then there are $l,m\in\mathbb{N}$ such that $x\in\thr^lS,y\in\thr^mS$. Assume that $m\geq l$ then $\thr^lS\subseteq\thr^mS$ implies $x\in\thr^mS$. Therefore $x,y\in\thr^mS$ yields by definition of threading $[x,y]\subseteq\thr^{m+1}S\subseteq\bigcup_{n\in\mathbb{N}}\thr^nS$. Hence $\bigcup_{n\in\mathbb{N}}\thr^nS$ is a convex set. But $\co S$ is the smallest convex set containing $S$ thus 
	$$\co S\subseteq \bigcup_{n\in\mathbb{N}}\thr^nS.$$
	On the other hand from \eqref{eq:id1} and the first part of this theorem we have 
	$$\thr S=\thr(S\cap\co S)\subseteq \thr S\cap \thr\co S\subseteq\thr\co S=\co S$$
	therefore iterative threading implies 
	$$\thr^nS\subseteq\co S,\hspace{0.2cm}\forall n\in\mathbb{N}\Rightarrow \bigcup_{n\in\mathbb{N}}\thr^nS\subseteq\co S.$$
\end{proof}

\begin{proposition}
	\label{p:isomorphism}
	Let $(X_1,d_1)$ and $(X_2,d_2)$ be complete $\CAT(0)$ spaces and $\Phi:X_1\to X_2$ an isometry. Then $\Phi(\thr S)=\thr\Phi(S)$ and in particular $\Phi(\cl\co S)=\cl\co\Phi(S)$ for any $S\subseteq X_1$. 
\end{proposition}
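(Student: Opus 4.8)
The plan is to reduce everything to the single observation that an isometry between uniquely geodesic spaces carries geodesic segments onto geodesic segments. First I would show that $\Phi([x,y])=[\Phi(x),\Phi(y)]$ for all $x,y\in X_1$. Let $\gamma:[0,\ell]\to X_1$ be the geodesic segment from $x$ to $y$. Since $\Phi$ is an isometry, $d_2(\Phi\gamma(t_1),\Phi\gamma(t_2))=d_1(\gamma(t_1),\gamma(t_2))=|t_1-t_2|$ for all $t_1,t_2\in[0,\ell]$, so $\Phi\circ\gamma$ is a geodesic segment in $X_2$ from $\Phi(x)$ to $\Phi(y)$. Because $X_2$ is $\CAT(0)$ and hence uniquely geodesic, $\Phi\circ\gamma$ must be the geodesic segment $[\Phi(x),\Phi(y)]$; its image is therefore $[\Phi(x),\Phi(y)]$, and this image is precisely $\Phi([x,y])$.

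Next I would unwind the definition of threading. If $z\in\thr S$, choose $x,y\in S$ with $z\in[x,y]$; then $\Phi(z)\in\Phi([x,y])=[\Phi(x),\Phi(y)]$ with $\Phi(x),\Phi(y)\in\Phi(S)$, so $\Phi(z)\in\thr\Phi(S)$, giving $\Phi(\thr S)\subseteq\thr\Phi(S)$. Conversely, any $w\in\thr\Phi(S)$ lies on $[\Phi(x),\Phi(y)]=\Phi([x,y])$ for some $x,y\in S$, hence $w=\Phi(z)$ with $z\in[x,y]\subseteq\thr S$. Thus $\Phi(\thr S)=\thr\Phi(S)$. Iterating this identity (using $\thr^nS=\thr(\thr^{n-1}S)$) yields $\Phi(\thr^nS)=\thr^n\Phi(S)$ for all $n\in\mathbb{N}$, and since $\Phi$ commutes with arbitrary unions, Theorem \ref{thrcvxthm} gives
\[
\Phi(\co S)=\Phi\Big(\bigcup_{n\in\mathbb{N}}\thr^nS\Big)=\bigcup_{n\in\mathbb{N}}\thr^n\Phi(S)=\co\Phi(S).
\]

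Finally I would pass to closures. Being an isometry, $\Phi$ is a homeomorphism onto its image $\Phi(X_1)$; moreover $\Phi(X_1)$ is complete, being isometric to the complete space $X_1$, hence it is closed in $X_2$. Consequently, for any $A\subseteq X_1$ the closure of $\Phi(A)$ in $X_2$ coincides with its closure in $\Phi(X_1)$, which is $\Phi(\cl A)$. Applying this with $A=\co S$ and combining with the previous display gives $\Phi(\cl\co S)=\cl\Phi(\co S)=\cl\co\Phi(S)$.

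I do not expect a serious obstacle: once unique geodesicity is invoked the argument is essentially bookkeeping. The one point that needs a little care is the closure step, where one must avoid assuming $\Phi$ is surjective; this is handled by the remark that $\Phi(X_1)$ is closed in $X_2$ because it is complete, so that taking closures in $X_2$ or in $\Phi(X_1)$ makes no difference.
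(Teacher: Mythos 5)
Your proof is correct and follows essentially the same route as the paper's: show $\Phi([x,y])=[\Phi(x),\Phi(y)]$ via unique geodesicity, unwind the definition of threading, iterate, and invoke Theorem \ref{thrcvxthm} before passing to closures. If anything, your treatment of the final closure step (noting that $\Phi(X_1)$ is complete, hence closed in $X_2$, so $\Phi(\cl A)=\cl\Phi(A)$ without assuming surjectivity) is slightly more careful than the paper's appeal to continuity alone, which by itself only gives one inclusion.
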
 
\begin{proof}
	Let $x,y\in S$ then $[x,y]\subseteq \thr S$. Denote by $x_t\coloneqq(1-t)x\oplus ty$ for $t\in[0,1]$ then $$d_2(\Phi(x),\Phi(x_t))=d_1(x,x_t)=td_1(x,y)=td_2(\Phi(x),\Phi(y))$$
	implies $\Phi(x_t)=(1-t)\Phi(x)\oplus t\Phi(y)$ for all $t\in[0,1]$. Hence $\Phi([x,y])=[\Phi(x),\Phi(y)]$. This shows $\Phi(\thr S)\subseteq \thr\Phi(S)$. For the other direction use the inverse mapping $\Phi^{-1}$ which is as well an isometry. The same arguments yield $\thr\Phi(S)\subseteq \Phi(\thr S)$. In general by induction we obtain $\Phi(\thr^nS)=\thr^n\Phi(S)$ for every $n\in\mathbb N$. Since $\thr^nS\subseteq\thr^{n+1}S$ then we can equivalently state that $\Phi(\bigcup_{k=1}^n\thr^kS)=\bigcup_{k=1}^n\thr^k\Phi(S)$ for every $n\in\mathbb N$ and consequently by Theorem \ref{thrcvxthm} we obtain $\Phi(\co S)=\co\Phi(S)$. Taking the closure on both sides and using continuity of $\Phi$, since it is an isometry, we get that $\Phi(\cl\co S)=\cl\co\Phi(S)$. 
\end{proof}

For a given set $S\subseteq X$ the \textit{threading degree} of $S$ denoted by $\deg_{\thr}S$ is defined as the smallest $n\in\mathbb{N}$, if it exists, such that $\thr^nS=\co S$. 
The threading degree of a complete $\CAT(0)$ space $X$ is then the supremum of threading degrees of all subsets of $X$ i.e. 
\begin{equation}
\label{eq:thrdegree}
\deg_{\thr}X\coloneqq\sup_{S\subseteq X}\deg_{\thr}S.
\end{equation}
If $\deg_{\thr}X$ is finite we say that $X$ is of \textit{finite type}.

\begin{proposition}
	Let $(X,d)$ be a complete $\CAT(0)$ space of finite type. For any closed convex set $S\subseteq X$ the \textit{subspace} $(S,d_S)$ is of finite type, where $d_S\coloneqq d\mid_S$.
\end{proposition}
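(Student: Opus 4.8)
The plan is to show that both the threading operator and the convex-hull operator are \emph{intrinsic} to $S$, so that the threading degree of a subset $T\subseteq S$ is the same whether it is computed inside $(S,d_S)$ or inside $X$; finiteness of $\deg_{\thr}X$ then passes to $S$ at once.

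First I would record that $(S,d_S)$ is itself a complete $\CAT(0)$ space: being a closed subset of the complete space $X$ it is complete, and being a convex subset of a $\CAT(0)$ space it is $\CAT(0)$, since every geodesic triangle in $S$ is a geodesic triangle of $X$ (the $X$-geodesic joining two points of $S$ stays in $S$ by convexity) and the comparison inequality is inherited. In particular $(S,d_S)$ is uniquely geodesic, so that its threading degree is well defined. The key observation is then that for $x,y\in S$ the geodesic $[x,y]$ formed in $(S,d_S)$ coincides with the one formed in $X$: the $X$-segment lies in $S$ by convexity, it is a geodesic of $(S,d_S)$ because $d_S$ restricts $d$, and uniqueness of geodesics in $\CAT(0)$ spaces leaves no other candidate. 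Hence, writing $\thr_S$ and $\thr_X$ for threading taken in the respective spaces, one gets $\thr_S T=\thr_X T$ for every $T\subseteq S$, and inductively $\thr_S^{\,n}T=\thr_X^{\,n}T$ for all $n\in\mathbb N$. For the same reason a subset of $S$ is convex in $(S,d_S)$ precisely when it is convex in $X$, and since $\co_X T\subseteq S$ by convexity of $S$, the convex hull of $T$ is the same object in both spaces; alternatively this is immediate from Theorem \ref{thrcvxthm}, as $\co_S T=\bigcup_{n}\thr_S^{\,n}T=\bigcup_n\thr_X^{\,n}T=\co_X T$.

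Combining these facts, for every $T\subseteq S$ and every $n\in\mathbb N$ the equality $\thr_S^{\,n}T=\co_S T$ holds if and only if $\thr_X^{\,n}T=\co_X T$; therefore the threading degree of $T$ computed in $(S,d_S)$ equals its threading degree computed in $X$, which is at most $\deg_{\thr}X$. Taking the supremum over all $T\subseteq S$ yields $\deg_{\thr}(S,d_S)\le\deg_{\thr}X<\infty$, so $(S,d_S)$ is of finite type. I do not expect a genuine obstacle; the only step requiring care is the intrinsicness claim — verifying that passing to the subspace neither enlarges nor shrinks geodesics, threadings, or convex hulls — and this rests entirely on the convexity of $S$ together with the uniqueness of geodesics in $\CAT(0)$ spaces.
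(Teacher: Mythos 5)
Your proposal is correct and follows essentially the same route as the paper: establish that $(S,d_S)$ is a complete $\CAT(0)$ space, observe that $\thr_S T=\thr_X T$ for every $T\subseteq S$, and take the supremum of threading degrees to conclude $\deg_{\thr}S\le\deg_{\thr}X<\infty$. If anything, you are more careful than the paper in justifying the key identity $\thr_S T=\thr_X T$ (via convexity of $S$ and uniqueness of geodesics) and in checking that convex hulls agree in the two spaces, both of which the paper leaves implicit.
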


\begin{proof}
	First note that $(S,d_S)$ is itself a complete $\CAT(0)$ space. Let $\thr_SS'$ and $\thr_XS'$ denote threading applied to $S'\subseteq S$ in $S$ and $X$ respectively. Since $d_S=d\mid_S$ 
	then $\thr_SS'=\thr_XS'$. By definition \eqref{eq:thrdegree} it follows
	$$\deg_{\thr_S}S=\sup_{S'\subseteq S}\deg_{\thr_S}S'=\sup_{S'\subseteq S}\deg_{\thr_X}S'\leq \sup_{S'\subseteq X}\deg_{\thr_X}S'=\deg_{\thr_X}X.$$
	By assumption $\deg_{\thr_X}X<+\infty$ we get that $(S,d_S)$ is of finite type.
\end{proof}

\begin{proposition}
	\label{thrproduct}
	If $(X_1,d_1), (X_2,d_2)$ are of finite type then $(X,d)$, where $X\coloneqq X_1\times X_2$ and $d(\cdot,\cdot)$ is its canonical metric, is of finite type. Moreover the following holds $$\deg_{\thr}X=\max\{\deg_{\thr}X_1,\deg_{\thr}X_2\}.$$ 
\end{proposition}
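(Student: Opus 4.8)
The plan is to prove the two inequalities $\deg_{\thr}X\geq m$ and $\deg_{\thr}X\leq m$ separately, where $m\coloneqq\max\{n_1,n_2\}$ and $n_i\coloneqq\deg_{\thr}X_i$; the first is routine, while the second carries the real content and simultaneously yields that $X$ is of finite type. The only structural facts about the product I would use are that, with the canonical ($\ell^2$) metric, $X=X_1\times X_2$ is again a complete $\CAT(0)$ space, that the geodesic from $(x_1,x_2)$ to $(y_1,y_2)$ is $t\mapsto\bigl((1-t)x_1\oplus ty_1,\,(1-t)x_2\oplus ty_2\bigr)$ --- so each coordinate projection $\pi_i\colon X\to X_i$ is surjective and carries every segment $[p,q]$ onto $[\pi_ip,\pi_iq]$ --- and that each slice $X_1\times\{x_2\}$, $\{x_1\}\times X_2$ is a closed convex (hence itself complete $\CAT(0)$) subset of $X$ isometric to the corresponding factor.

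For the lower bound I would fix $x_2\in X_2$ and note that, since $X_1$ is of finite type, $\{\deg_{\thr}S:S\subseteq X_1\}$ is a nonempty subset of $\mathbb N$ bounded above by $n_1$, hence attains $n_1$; pick $S_1\subseteq X_1$ with $\deg_{\thr}S_1=n_1$ and set $\widehat S_1\coloneqq S_1\times\{x_2\}$. A geodesic in $X$ with both endpoints in a slice stays in that slice (its second coordinate is the constant geodesic at $x_2$), so threading of $\widehat S_1$ can be carried out inside $X_1\times\{x_2\}$; since that slice is isometric to $X_1$, Proposition \ref{p:isomorphism} gives $\thr^n\widehat S_1\cong\thr^nS_1$ and $\co\widehat S_1\cong\co S_1$ for every $n$, so $\deg_{\thr}\widehat S_1=n_1$ and therefore $\deg_{\thr}X\geq n_1$. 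The symmetric argument with a slice $\{x_1\}\times X_2$ gives $\deg_{\thr}X\geq n_2$, hence $\deg_{\thr}X\geq m$.

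For the upper bound the first step would be the coordinatewise identity
\[
\pi_i(\thr^nS)=\thr^n(\pi_iS)\qquad(i=1,2,\ n\in\mathbb N,\ S\subseteq X),
\]
proved by induction on $n$: for $n=1$, $\pi_i(\thr S)=\bigcup_{p,q\in S}\pi_i([p,q])=\bigcup_{p,q\in S}[\pi_ip,\pi_iq]=\thr(\pi_iS)$, and then $\pi_i(\thr^{n+1}S)=\pi_i\bigl(\thr(\thr^nS)\bigr)=\thr\bigl(\pi_i(\thr^nS)\bigr)=\thr\bigl(\thr^n(\pi_iS)\bigr)=\thr^{n+1}(\pi_iS)$. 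Since $n_i\leq m$ and $\thr^{n_i}(\pi_iS)=\co(\pi_iS)$ is convex, this gives $\pi_i(\thr^mS)=\co(\pi_iS)=\pi_i(\co S)$ for $i=1,2$. What would remain, and where the bulk of the work lies, is to upgrade this coordinatewise equality to $\thr^mS=\co S$ --- equivalently, by Theorem \ref{thrcvxthm} (which identifies $\co S$ with $\bigcup_n\thr^nS$), to the convexity of $\thr^mS$. Together with the inclusion $\thr^mS\subseteq\co S$ already in Theorem \ref{thrcvxthm}, this would give $\deg_{\thr}S\leq m$ for every $S\subseteq X$, hence $\deg_{\thr}X\leq m$ and in particular finiteness of $\deg_{\thr}X$.

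The hard part will be exactly this last step, because threading in a product is \emph{synchronised}: a point of $\thr^mS$ arises from a binary tree of geodesic combinations of depth at most $m$ whose leaves are genuine points of $S$, and at every internal node the same parameter acts in both coordinates, so in general $\thr^mS$ is not a product set and $\co S\subsetneq\co(\pi_1S)\times\co(\pi_2S)$ --- one cannot simply multiply the constructions in the two factors. My plan to overcome this would be: given $z=(z_1,z_2)\in\co S$, fix threading trees $T_1,T_2$ of depths $n_1,n_2$ realising $z_1\in\thr^{n_1}(\pi_1S)$ and $z_2\in\thr^{n_2}(\pi_2S)$, lift the leaves of each to points of $S$, pad both trees to the common depth $m$, and then interleave them along a common parameter structure into a single depth-$m$ tree over leaves of $S$ reproducing $z$ in both coordinates. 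The crux is to arrange the interleaving so that correcting the second coordinate does not disturb the first and so that the total depth stays $m$ rather than $n_1+n_2$; I would isolate this as a separate combinatorial lemma and expect it to be the most delicate point of the argument.
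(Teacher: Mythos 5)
Your lower bound $\deg_{\thr}X\geq\max\{n_1,n_2\}$ via slices is fine, and the coordinatewise identity $\pi_i(\thr^nS)=\thr^n(\pi_iS)$ is correct. But the step you defer to a ``separate combinatorial lemma'' --- upgrading that coordinatewise equality to $\thr^mS=\co S$ --- is not a delicate point to be worked out later: it is false, and the obstruction is exactly the synchronisation phenomenon you yourself describe. Take $X_1=X_2=\mathbb R$, so $n_1=n_2=1$ (for any $S\subseteq\mathbb R$ the set $\thr S$ is already convex, hence equals $\co S$). Then $X=\mathbb R\times\mathbb R$ with the canonical metric is the Euclidean plane, and for three non-collinear points $S=\{p,q,r\}$ the set $\thr S=[p,q]\cup[q,r]\cup[r,p]$ is the boundary of a triangle, which is not convex; hence $\deg_{\thr}S=2$ and $\deg_{\thr}X\geq 2>1=\max\{n_1,n_2\}$. (The paper's own Example \ref{example1}, which computes $\deg_{\thr}\mathbb E^d=d$, already contradicts the proposition, since $\mathbb E^{d_1}\times\mathbb E^{d_2}=\mathbb E^{d_1+d_2}$.) So no interleaving of depth-$m$ threading trees can exist in general, and the asserted equality must be abandoned; at best one could hope for an additive bound such as $\deg_{\thr}X\leq\deg_{\thr}X_1+\deg_{\thr}X_2$, which is what the Euclidean case suggests.

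For what it is worth, the paper's own proof is also broken, and at an earlier point: it asserts that every $S\subseteq X_1\times X_2$ has the form $S_1\times S_2$ and that $[x,y]=[x_1,y_1]\times[x_2,y_2]$, neither of which is true (the geodesic is the diagonal curve $t\mapsto\bigl((1-t)x_1\oplus ty_1,(1-t)x_2\oplus ty_2\bigr)$, not the two-parameter product of segments). You correctly refused both identifications, which is why your write-up exposes the real difficulty instead of hiding it --- but that difficulty is fatal to the statement, not merely to this particular proof strategy.
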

\begin{proof}
	Let $S\subseteq X$ then $S=S_1\times S_2$ for some $S_1\subseteq X_1, S_2\subseteq X_2$. Let $z\in \thr S$ then there are $x,y\in S$ such that $z\in[x,y]$. On the other hand $x=(x_1,x_2)$ and $y=(y_1,y_2)$ for some $x_1,y_1\in S_1$ and $x_2,y_2\in S_2$. By construction if follows $[x,y]=[x_1,y_1]\times[x_2,y_2]$ therefore $z=(z_1,z_2)$ for some $z_1\in[x_1,y_1]$ and $z_2\in[x_2,y_2]$. This means that $z\in\thr S_1\times \thr S_2$. 
	The other direction is proved analogously. In general we get that $\thr^nS=\thr^nS_1\times\thr^nS_2$ for any $n\in\mathbb{N}$. It follows then that $\deg_{\thr}S=\max\{\deg_{\thr}S_1,\deg_{\thr}S_2\}$. On the other hand we have the identity $\{S:S\subseteq X\}=\{S_1:S_1\subseteq X_2\}\times\{S_2:S_2\subseteq X_2\}$.
	Taking supremum over $S\subseteq X$ 
	\begin{align*}
	\sup_{S\subseteq X}\deg_{\thr}S&=\sup_{S_1\subseteq X_1, S_2\subseteq X_2}\max\{\deg_{\thr}S_1,\deg_{\thr}S_2\}\\&=\max\{\sup_{S_1\subseteq X_1}\deg_{\thr}S_1,\sup_{S_2\subseteq X_2}\deg_{\thr}S_2\}
	\end{align*}
	which is equivalent to 
	$\deg_{\thr}X=\max\{\deg_{\thr}X_1,\deg_{\thr}X_2\}$.
\end{proof}

Notice that in general if $S_1,S_2\subseteq X$ are such that $S_1\subseteq S_2$ then neither $\deg_{\thr}S_1\leq\deg_{\thr}S_2$ nor $\deg_{\thr}S_2\leq\deg_{\thr}S_1$ is necessarily true. Consider $X=\mathbb{R}^2$ equipped with the usual Euclidean metric and let $S_1$ be four corners of a square and $S_2$ be the square itself. Clearly $S_1\subseteq S_2$ but $\deg_{\thr}S_1=2>1=\deg_{\thr}S_1$. Similarly one can show that if instead $S_1$ is just two of the corner points and $S_2$ is the four corner points then $\deg_{\thr}S_1=1<2=\deg_{\thr}S_2$. Figure \ref{fig:threading} depicts an example of threading for three distinct non-collinear points in the Euclidean plane $\mathbb{R}^2$.
\begin{figure}[t]
	\centering
	\includegraphics[width=11cm]{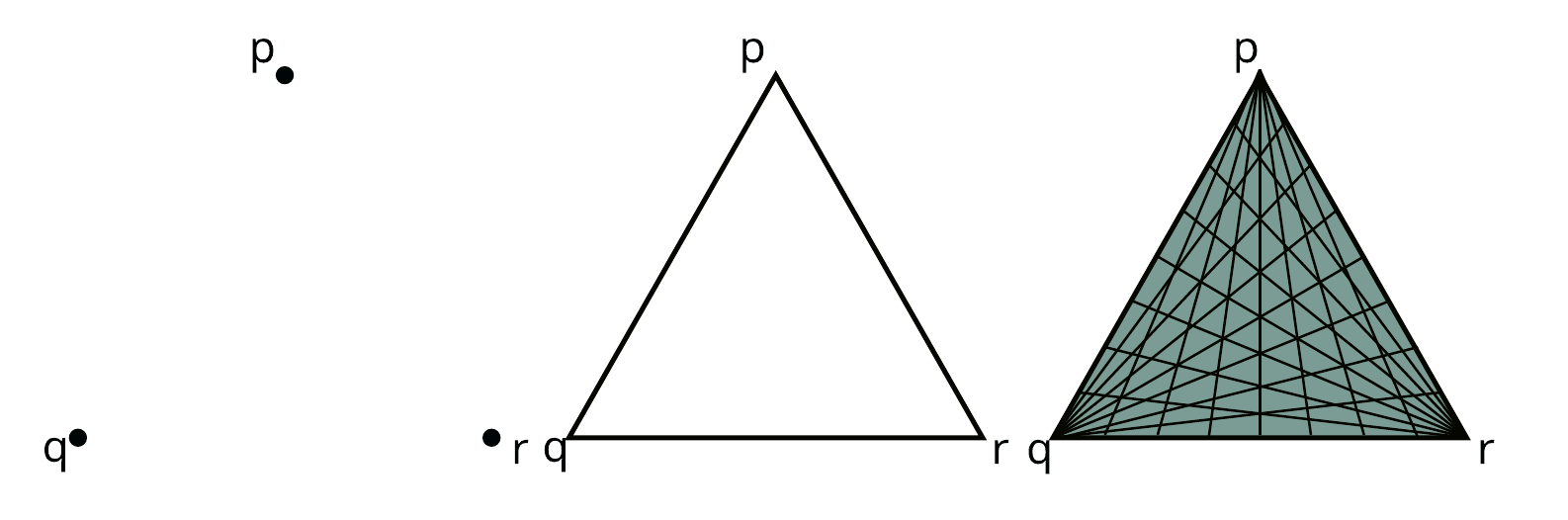}
	\caption{Three points $\{p,q,r\}$ in $\mathbb{R}^2$ (left), $\thr\{p,q,r\}$ is the Euclidean triangle (middle) and $\thr^2\{p,q,r\}$ is the solid Euclidean triangle (right).} \label{fig:threading}
\end{figure}

\subsection{Threading of compact sets}
\begin{thm}
	\label{thrcompact}
	$\thr^nK$ is compact for all $n\in\mathbb{N}$ whenever $K\subseteq X$ is compact.
\end{thm}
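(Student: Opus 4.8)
The plan is to show that threading preserves compactness by induction on $n$, where the crux is the base case $n=1$: proving $\thr K$ is compact whenever $K$ is compact. Since $\thr^nK = \thr(\thr^{n-1}K)$ by definition, once the base case is established the inductive step is immediate.

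For the base case, I would realize $\thr K$ as the continuous image of a compact set. Consider the map $\varphi : K \times K \times [0,1] \to X$ defined by $\varphi(x,y,t) \coloneqq (1-t)x \oplus ty$, i.e. the point $x_t$ on the geodesic $[x,y]$ at parameter $t$. By the very definition of $\thr K$, its image is exactly $\thr K = \varphi(K \times K \times [0,1])$. Since $K$ is compact and $[0,1]$ is compact, the product $K \times K \times [0,1]$ is compact (Tychonoff, or just finite products of compacta). So it suffices to prove $\varphi$ is continuous; then $\thr K$, being the continuous image of a compact set, is compact.

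The main obstacle, and the step deserving the most care, is continuity of $\varphi$. The key estimate is that the geodesic interpolation map in a $\CAT(0)$ space is jointly continuous in all three variables — in fact nonexpansive in a suitable sense. Concretely, for $x,x',y,y' \in X$ and $s,t \in [0,1]$ one can bound
\begin{equation*}
d\bigl((1-s)x\oplus sy,\ (1-t)x'\oplus ty'\bigr) \le d\bigl((1-s)x\oplus sy,\ (1-s)x'\oplus sy'\bigr) + d\bigl((1-s)x'\oplus sy',\ (1-t)x'\oplus ty'\bigr),
\end{equation*}
then control the first term using the convexity inequality \eqref{eq:quadratic} (which gives $d((1-s)x\oplus sy, (1-s)x'\oplus sy') \le (1-s)d(x,x') + s\,d(y,y')$, the standard consequence that geodesics in $\CAT(0)$ spaces fan out convexly), and control the second term by noting that both points lie on the single geodesic $[x',y']$, so their distance is $|s-t|\,d(x',y')$, which is small when $|s-t|$ is small and $d(x',y')$ is bounded (it is, since $K$ is bounded). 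Assembling these gives joint continuity of $\varphi$ on $K\times K\times[0,1]$.

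Finally, the induction: suppose $\thr^{n-1}K$ is compact. Then applying the base case to the compact set $\thr^{n-1}K$ in place of $K$ yields that $\thr^nK = \thr(\thr^{n-1}K)$ is compact. This closes the induction and proves the theorem. One small point worth checking in writing up the base case is that the convexity estimate $d((1-s)x\oplus sy,(1-s)x'\oplus sy') \le (1-s)d(x,x') + s\,d(y,y')$ can be derived cleanly from \eqref{eq:quadratic} — it follows by applying \eqref{eq:quadratic} twice and taking square roots, or is available directly as a standard fact about $\CAT(0)$ spaces; I would cite it or include the one-line derivation rather than belabor it.
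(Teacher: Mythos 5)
Your proof is correct and follows essentially the same route as the paper: both arguments reduce the base case to compactness of $K\times K\times[0,1]$ together with continuity of the geodesic interpolation map $(x,y,t)\mapsto(1-t)x\oplus ty$, and then close the induction via $\thr^nK=\thr(\thr^{n-1}K)$. The only difference is in packaging: the paper runs a sequential-compactness argument, verifying $x_{k_m}\to(1-t)y\oplus tz$ by plugging the limit point into inequality \eqref{eq:quadratic} and checking the right-hand side vanishes, whereas you establish full joint continuity of the interpolation map via convexity of the metric and invoke the fact that the continuous image of a compact set is compact --- an equivalent and equally valid presentation.
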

\begin{proof}
	First we prove that $\thr K$ is compact. Let $(x_k)_{k\in\mathbb{N}}\subseteq \thr K$ be some sequence. There are $y_k,z_k\in K$ and $t_k\in[0,1]$ such that $x_k=(1-t_k)y_k\oplus t_k\,z_k$ for all $k$. By assumption $K$ is compact then so is the product space $K\times K\times [0,1]$.
	There are convergent subsequences $(y_{k_m})_{m\in\mathbb{N}}, (z_{k_m})_{m\in\mathbb{N}}\subseteq K$ and $(t_{k_m})_{m\in\mathbb{N}}\subseteq [0,1]$. Let $\lim_{m\to+\infty}y_{k_m}=y, \lim_{m\to+\infty}z_{k_m}=z$ for $y,z\in K$ and $\lim_{m\to+\infty}t_{k_m}=t\in[0,1]$. Denote by $x\coloneqq (1-t)y\oplus t\,z$. Then we have from characterization inequality \eqref{eq:quadratic} for all $m\in\mathbb N$ that
	\begin{align*}
	d(x,x_{k_m})^2&\leq (1-t_{k_m})d(x,y_{k_m})^2+t_{k_m}d(x,z_{k_m})^2-t_{k_m}(1-t_{k_m})d(y_{k_m},z_{k_m})^2.
	\end{align*}
	Passing in the limit we obtain 
	\begin{align*}
	\lim_{m\to+\infty}\Big[(1-t_{k_m})&d(x,y_{k_m})^2+t_{k_m}d(x,z_{k_m})^2-t_{k_m}(1-t_{k_m})d(y_{k_m},z_{k_m})^2\Big]\\&=
	(1-t)d(x,y)^2+td(x,z)^2-t(1-t)d(y,z)^2\\&
	=(1-t)t^2d(z,y)^2+t(1-t)^2d(y,z)^2-t(1-t)d(y,z^2)=0.
	\end{align*}
	Therefore $\lim_{m\to+\infty}x_{k_m}=x\in \thr K$. Since the sequence $(x_k)_{k\in\mathbb N}\subseteq \thr K$ is arbitrary then this means that 
	$\thr K$ is sequentially compact and therefore compact. Now suppose $\thr^{n-1}K$ is compact. From the equation $\thr^nK=\thr(\thr^{n-1}K)$ we get that $\thr^nK$ is compact for any $n\in\mathbb{N}$.
\end{proof}

As immediate corollaries of the last result we obtain:

\begin{corollary}
	\label{thrdegcompact}
	The followings are true:
	\begin{enumerate}[(i)]
		\item Let $K\subseteq X$ be compact. If $\deg_{\thr}K$ is finite then $\co K$ is compact.
		\item 	If $X$ is of finite type then $\co K$ is compact whenever $K$ is compact.
	\end{enumerate}
	
\end{corollary}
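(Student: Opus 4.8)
The plan is to obtain both statements as immediate consequences of Theorem \ref{thrcvxthm} and Theorem \ref{thrcompact}, with no new work beyond unwinding definitions. For part (i), I would argue as follows: assume $\deg_{\thr}K=n<+\infty$. By the very definition of the threading degree this means $\thr^nK=\co K$. Since $K$ is compact, Theorem \ref{thrcompact} gives that $\thr^nK$ is compact, and therefore $\co K$ is compact. It is worth noting in passing that this yields the stronger conclusion that $\co K$ is already closed, so that $\co K=\cl\co K$.

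For part (ii), the only thing to check is that finiteness of $\deg_{\thr}X$ forces every compact subset to have a finite threading degree. This is built into definition \eqref{eq:thrdegree}: since $\deg_{\thr}X=\sup_{S\subseteq X}\deg_{\thr}S$, if this supremum is finite then in particular $\deg_{\thr}K\leq\deg_{\thr}X<+\infty$ for every compact $K\subseteq X$. Part (i) then applies verbatim and gives that $\co K$ is compact.

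I do not expect any genuine obstacle here: both claims are purely formal once one has in hand that iterated threading exhausts the convex hull (Theorem \ref{thrcvxthm}) and that threading preserves compactness (Theorem \ref{thrcompact}). The single point I would be careful to articulate is why the finite-type hypothesis is doing real work: without a finite threading degree one only knows $\co K=\bigcup_{m\in\mathbb{N}}\thr^mK$, a countable nondecreasing union of compact sets, which in general need not be compact; finiteness of $\deg_{\thr}K$ is exactly what collapses this union to a single compact stage $\thr^nK$.
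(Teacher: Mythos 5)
Your proof is correct and is exactly the unwinding the paper intends: it states this as an immediate corollary of Theorem \ref{thrcompact} and gives no separate argument, and your two steps (finite degree collapses $\co K$ to the single compact stage $\thr^nK$; finite type bounds every subset's degree) are the intended ones.
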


From Kope\v cka--Reich observation \cite[Theorem 2.10]{Reich} we get the result:

\begin{corollary}
	Let $(X,d)$ be a complete $\CAT(0)$ space such that $\deg_{\thr}S$ is finite for any finite set $S\subseteq X$. Then the closure of the convex hull of a compact set is compact.
\end{corollary}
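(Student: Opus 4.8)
The plan is to reduce to the case of finite sets via the Kope\v cka--Reich reduction and then invoke the fact that threading preserves compactness. First I would fix an arbitrary finite set $S=\{x_1,\dots,x_n\}\subseteq X$. Being finite, $S$ is compact, and by hypothesis $\deg_{\thr}S$ is finite, say $\deg_{\thr}S=m$; by definition of the threading degree this means $\thr^mS=\co S$. Theorem \ref{thrcompact} guarantees that $\thr^mK$ is compact whenever $K$ is compact, so applying it with $K=S$ yields that $\thr^mS=\co S$ is compact. Since compact subsets of a metric space are closed, $\cl\co S=\co S$ is compact; equivalently, this is just Corollary \ref{thrdegcompact}(i) specialized to the (automatically compact) finite set $S$.

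Next I would appeal to the observation of Kope\v cka and Reich \cite[Theorem 2.10]{Reich}: if $\cl\co\{x_1,\dots,x_n\}$ is compact for every finite set $\{x_1,\dots,x_n\}\subseteq X$, then $\cl\co K$ is compact for every compact set $K\subseteq X$. The previous paragraph establishes precisely the hypothesis of this statement for every finite subset of $X$, so the desired conclusion follows at once.

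There is essentially no serious obstacle here beyond correctly assembling the pieces. The only point needing a moment's care is that the standing hypothesis concerns finite sets, whereas Theorem \ref{thrcompact} and Corollary \ref{thrdegcompact} are phrased for arbitrary compact sets; but finite sets are compact, so the specialization is legitimate, and it is exactly the Kope\v cka--Reich reduction that bridges back from the finite case to all compact sets.
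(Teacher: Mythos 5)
Your proof is correct and follows exactly the route the paper intends: the paper states this corollary as an immediate consequence of the Kope\v cka--Reich observation together with Corollary \ref{thrdegcompact}(i) applied to finite (hence compact) sets, which is precisely your argument. Nothing further is needed.
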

%


Note that a Hilbert space $\mathcal H$ fails to be of finite type. If $(e_n)_{n\in\mathbb N}$ is the standard basis then $\deg_{\thr}\{e_1,...,e_n\}=n$ (follows from Caratheodory's Theorem see Example \ref{example1}) implies 
$\deg_{\thr}\mathcal H\geq \sup_{n\in\mathbb N}\deg_{\thr}\{e_1,...,e_n\}=+\infty$.
Now consider the unbounded ray of real numbers with at point $x=n$ an $n$-dimensional cube attached (1-point union). In this example every compact set is of finite type but their threading degrees are not uniformly bounded. Consequently the space itself is not of finite type \footnote{These examples were pointed out by Prof. Thomas Schick.}.

\begin{thm}
	For a given set $K\subseteq X$ define
	\begin{equation}
	\label{eq:YK}
	Y_K\coloneqq\{(y_m)_{m\in\mathbb{N}}\subseteq \co K: \exists n\in\mathbb{N},\quad y_m\in\thr^nK\quad\text{for infinitely many}\quad m\in\mathbb{N}\}.
	\end{equation}
	If $K$ is compact and $\cl Y_K=\cl\co K$ then $\cl\co K$ is compact.
\end{thm}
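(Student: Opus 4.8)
The plan is to show that $\cl\co K$ is totally bounded; as a closed subset of the complete space $X$ it is then complete, and a complete totally bounded metric space is compact. (One could equally aim at local compactness of $\cl\co K$: it is a closed convex subset of a complete $\CAT(0)$ space, hence itself a complete bounded geodesic space, and for such spaces local compactness together with the Hopf--Rinow theorem — as already used in Section~\ref{s:intro} — yields compactness.) The structural facts needed are in place: by Theorem~\ref{thrcvxthm}, $\co K=\bigcup_{n\in\mathbb N}\thr^nK$ with the threadings a nondecreasing chain; by Theorem~\ref{thrcompact}, each $\thr^nK$ is compact, hence closed and totally bounded, because $K$ is; and $\co K$ is bounded, since $K\subseteq\mathbb B[x_0,\rho]$ for suitable $x_0,\rho$, and this ball is convex so $\co K\subseteq\mathbb B[x_0,\rho]$. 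Finally, unwinding the definition of $Y_K$: a point $y\in Y_K$ is one that no sequence in $\co K$ can approach while its threading depth escapes to infinity, so $y$ sits in some fixed compact $\thr^{n(y)}K$; a short argument then yields a radius $r_y>0$ with $\mathbb B(y,r_y)\cap\cl\co K\subseteq\thr^{n(y)}K$, so every point of the dense set $Y_K$ has a relatively compact neighbourhood in $\cl\co K$.

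The key lemma I would isolate is that the compact pieces approximate $\cl\co K$ uniformly, $d_H\bigl(\thr^nK,\cl\co K\bigr)\to0$. Granting it, total boundedness is immediate: given $\varepsilon>0$, choose $n$ with $d_H(\thr^nK,\cl\co K)<\varepsilon/2$ and a finite $(\varepsilon/2)$-net of the compact set $\thr^nK$; that net is an $\varepsilon$-net of $\cl\co K$, and completeness then gives compactness. To prove the lemma I would argue by contradiction: a failure gives $\delta>0$ and points $u_n\in\cl\co K$ with $d(u_n,\thr^nK)\ge\delta$; pushing each $u_n$ onto the dense set $Y_K$ gives $w_n\in Y_K$ with $d(u_n,w_n)<\delta/2$, so $w_n\notin\thr^nK$, forcing the minimal depth $n(w_n)>n$ to diverge. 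This ``escaping'' sequence lives in the bounded convex set $\cl\co K$, and the task is to contradict the density hypothesis by using the geodesics joining the $w_n$ to nearby points of $Y_K$: the relative-compactness neighbourhoods $\mathbb B(y,r_y)\cap\cl\co K\subseteq\thr^{n(y)}K$ attached to those points should, via convexity of $\cl\co K$, trap an entire sub-neighbourhood of the relevant limiting behaviour inside a single $\thr^NK$, contradicting the divergence of $n(w_n)$. Once the lemma, hence total boundedness, is in hand, completeness of $\cl\co K$ finishes the proof.

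The hard part is precisely extracting this contradiction — equivalently, upgrading the \emph{pointwise} statement ``$Y_K$ is dense and each of its points carries a relatively compact neighbourhood of bounded threading depth'' to the \emph{global} statement ``$\cl\co K$ is totally bounded''. The radii $r_y$ need not be bounded away from $0$, and an open cover of a dense subset need not cover the closure, so no soft topological argument suffices; moreover the escaping sequence $(w_n)$ a priori need not even possess a cluster point in $\cl\co K$, which is exactly the phenomenon one is trying to exclude. Making the bisection/geodesic argument of the previous paragraph precise — genuinely exploiting convexity of $\cl\co K$, the metric behaviour of geodesics in a $\CAT(0)$ space, and the nestedness of the filtration $(\thr^nK)_{n\in\mathbb N}$ — is where the real work lies.
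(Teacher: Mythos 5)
There is a genuine gap: what you have written is a programme rather than a proof, and the step you yourself identify as ``where the real work lies'' --- the lemma $d_H(\thr^nK,\cl\co K)\to0$, equivalently the contradiction against the escaping sequence $(w_n)$ --- is never established. That lemma is essentially the whole content of the theorem (granting it, compactness is immediate, as you note), so deferring it means the proof is missing its core. The root of the difficulty is a misreading of \eqref{eq:YK}: $Y_K$ is a set of \emph{sequences} in $\co K$, namely those sequences having infinitely many terms in a single fixed $\thr^nK$; it is not a set of points. Under your pointwise reading (``a point $y\in Y_K$ sits in some fixed compact $\thr^{n(y)}K$'') the hypothesis $\cl Y_K=\cl\co K$ is automatically satisfied for every compact $K$, since by Theorem \ref{thrcvxthm} every point of $\co K$ already lies in some $\thr^nK$; a hypothesis that is always true cannot imply the conclusion, which is an open problem in general. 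This is precisely the obstruction you run into at the end: the pointwise statement genuinely does not upgrade to total boundedness, because the radii $r_y$ and the depths $n(y)$ need not be uniformly controlled.

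The hypothesis must instead be used at the level of sequences, and then the proof is short and direct (this is what the paper does): given an arbitrary sequence $(x_m)\subseteq\cl\co K$, choose $y_m\in\co K$ with $d(x_m,y_m)<1/m$; by the hypothesis $\cl Y_K=\cl\co K$ one may replace $(y_m)$ by a termwise-close sequence $(\widetilde y_m)\in Y_K$, i.e.\ one admitting a fixed $n$ with $\widetilde y_m\in\thr^nK$ for infinitely many $m$; Theorem \ref{thrcompact} makes $\thr^nK$ compact, so that subsequence has a convergent sub-subsequence, and the limit is also a limit of the corresponding subsequence of $(x_m)$ by the triangle inequality. Hence $\cl\co K$ is sequentially compact. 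No Hausdorff-distance estimate, no uniform approximation of $\cl\co K$ by the filtration $(\thr^nK)$, and no convexity or $\CAT(0)$ geometry beyond Theorems \ref{thrcvxthm} and \ref{thrcompact} is needed --- the sequential formulation of the hypothesis is exactly what lets one bypass the uniformity issue on which your approach founders.
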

\begin{proof}
	Let $K\subseteq X$ be a compact set and $Y_K$ be defined as in \eqref{eq:YK}. 
	Consider the limit $\lim_{n\to+\infty}\thr^nK$. By construction $\thr^{n-1}K\subseteq\thr^n K$ for any $n\in\mathbb{N}$. By Theorem \ref{thrcvxthm} we have $\co K=\bigcup_{n\in\mathbb{N}}\thr^nK$. Since $K\subseteq\thr K\subseteq...\subseteq \thr^{n-1}K\subseteq \thr^nK\subseteq...$ is a sequence of nested nondecreasing sets then by Lemma \ref{Painleve1} $\lim_{n\to+\infty}\thr^nK=\cl\bigcup_{n\in\mathbb{N}}\thr^nK$ or equivalently $\lim_{n\to+\infty}\thr^nK=\cl\co K$. In particular we obtain that when $K$ is compact $\cl\co K$ can be successively approximated in the sense of Painlev\'e--Kuratowski by a sequence of nondecreasing compact sets. Let $(x_m)_{m\in\mathbb{N}}\subseteq \cl\co K$ then for each $x_m$ there exists $y_m\in\co K$ such that $d(x_m,y_m)<1/m$. 
	Let $(y_m)_{m\in\mathbb N}\in Y_K$ else we can always find a sequence $(\widetilde{y}_m)_{m\in\mathbb N}\in Y_K$ such that $d(y_m,\widetilde{y}_m)<1/m$ for every $m\in \mathbb N$ because by assumption $\cl Y_K=\cl\co K$. Let $n\in\mathbb{N}$ be such that $\thr^nK$ contains infinitely many terms of the sequence $(y_m)_{m\in\mathbb{N}}$. 
	Denote this subsequence by $(y_{m_k})_{k\in\mathbb{N}}$.  By Lemma \ref{thrcompact} $\thr^{n}K$ is compact then there is some convergent subsequence $(y_{m_{k_j}})_{j\in\mathbb{N}}$. Let $\lim_jy_{m_{k_j}}=y$. The estimate
	$$d(x_{m_{k_j}},y)\leq d(x_{m_{k_j}},y_{m_{k_j}})+d(y_{m_{k_j}},y)$$
	implies $\lim_jx_{m_{k_j}}=y$. Therefore $(x_{m_{k_j}})_{j\in\mathbb{N}}$ would be a convergent subsequence of the original sequence $(x_m)_{m\in\mathbb{N}}$. Since $(x_m)_{m\in\mathbb N}$ is arbitrary then this means that $\cl\co K$ is sequentially compact and hence compact. 
\end{proof}

For the next result we make use of \cite[Theorem 7.2]{Jacob} that we state as a technical lemma.
\begin{lemma}
	\label{l:isometry}
	Let $(X,d)$ be a flat complete $\CAT(0)$ space. Then $X$ is isometric to a nonempty closed convex subset of a Hilbert space.
\end{lemma}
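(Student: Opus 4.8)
Since this is precisely \cite[Theorem 7.2]{Jacob}, one option is simply to cite it; but let me describe the direct route I would take. The plan is to manufacture from the flatness hypothesis an \emph{inner product} on $X$ relative to a fixed basepoint, use it to embed $X$ isometrically into a Hilbert space $\mathcal H$, and then check that the image is nonempty, closed and convex.

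First I would fix $o\in X$ and set
\begin{equation*}
Q(x,y)\coloneqq\tfrac12\bigl(d(o,x)^2+d(o,y)^2-d(x,y)^2\bigr),\qquad x,y\in X,
\end{equation*}
so that $Q$ is symmetric, $Q(x,x)=d(o,x)^2\ge 0$ and $Q(o,\cdot)=0$. Writing $x_t\coloneqq(1-t)x\oplus tx'$ and applying the equality form of \eqref{eq:quadratic} once with apex $z$ and once with apex $o$, the two $t(1-t)d(x,x')^2$ contributions cancel when one substitutes into the definition of $Q$, and one is left with
\begin{equation*}
Q(x_t,z)=(1-t)Q(x,z)+tQ(x',z)\qquad\text{for all }z\in X,\ t\in[0,1],
\end{equation*}
i.e. $Q$ is affine along geodesics in each slot. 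The decisive point to establish is that $Q$ is a positive semidefinite kernel, equivalently that $d^2$ is conditionally negative definite: $\sum_{i,j}c_ic_j d(x_i,x_j)^2\le 0$ whenever $\sum_i c_i=0$. Granting this, I would run the standard GNS/Schoenberg construction: form the free real vector space on symbols $\{\delta_x:x\in X\}$, equip it with the bilinear form extending $\langle\delta_x,\delta_y\rangle\coloneqq Q(x,y)$, quotient by its null space, complete to obtain $\mathcal H$, and set $\iota(x)\coloneqq[\delta_x]$.

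The verification that remains would be routine. One has $\|\iota(x)-\iota(y)\|^2=Q(x,x)-2Q(x,y)+Q(y,y)=d(x,y)^2$, so $\iota$ is an isometry; $\iota(X)$ is nonempty since $X\ne\emptyset$, and complete---hence closed in $\mathcal H$---because $\iota$ is an isometry and $(X,d)$ is complete. Finally, for $x,y\in X$ the set $\iota([x,y])$ is a geodesic in $\mathcal H$ joining $\iota(x)$ to $\iota(y)$, and since $\mathcal H$ is uniquely geodesic with straight segments as geodesics, $\iota([x,y])=[\iota(x),\iota(y)]$; thus $\iota(X)$ contains the segment between any two of its points and is convex. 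Transporting the Hilbert structure back along $\iota$ identifies $X$ with a nonempty closed convex subset of $\mathcal H$.

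The hard part will be the conditional negative definiteness of $d^2$, which is exactly what \cite[Theorem 7.2]{Jacob} supplies. Affinity of $Q$ along geodesics (the sole place flatness is used) together with the $\CAT(0)$ quasilinearization inequality $|Q(x,y)|\le d(o,x)d(o,y)=\sqrt{Q(x,x)Q(y,y)}$ disposes of the two-point case, and the triangle inequality alone already handles three points, so the real work is in bootstrapping to arbitrary finite Gram matrices---either by an inductive ``merging of collinear points'' argument built on the affinity identity, or by proving that $\co\{x_1,\dots,x_n\}$ is isometric to a convex subset of some $\mathbb R^{k}$. That bootstrap is the step I would budget the most effort for.
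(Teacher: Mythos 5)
Your first option---simply citing \cite[Theorem 7.2]{Jacob}---is exactly what the paper does: the statement is introduced there as a technical lemma imported from the literature, with no proof given. So on that route you and the paper coincide.

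Your sketched direct construction is the standard Schoenberg/GNS strategy and the surrounding verifications (affinity of $Q$ along geodesics from the equality case of \eqref{eq:quadratic}, the isometry identity $\|\iota(x)-\iota(y)\|^2=d(x,y)^2$, closedness from completeness, convexity from unique geodesics) are all sound. But as you yourself flag, the decisive step---that flatness forces $d^2$ to be conditionally negative definite, equivalently that the Gram matrix of $Q$ is positive semidefinite on arbitrary finite subsets---is left unproved, and the two- and three-point cases you dispose of do not bootstrap to $n$ points by any argument you actually supply. Since that step \emph{is} the content of the cited theorem, the sketch is not an independent proof but a reduction of the lemma to itself; if you want a self-contained argument you would need to carry out the induction (e.g.\ showing that convex hulls of finite sets in a flat Hadamard space are isometric to convex subsets of Euclidean spaces, or running the merging-of-midpoints argument on the affinity identity), which is genuinely nontrivial. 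For the purposes of this paper, the citation suffices and is what you should use.
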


\begin{thm}
	\label{th:flat}
	Let $(X,d)$ be a flat complete $\CAT(0)$ space and $K\subseteq X$ be a compact set then $\cl\co K$ is compact. If additionally $K$ is finite then $\co K$ is compact. 
\end{thm}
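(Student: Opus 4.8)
The plan is to push the whole problem into a Hilbert space by means of Lemma \ref{l:isometry} and then quote the classical fact (recalled in the footnote of Section \ref{s:intro}) that in a Hilbert space the closed convex hull of a compact set is compact. Concretely, I would first invoke Lemma \ref{l:isometry} to fix a Hilbert space $\mathcal{H}$ and an isometry $\Phi$ of $X$ onto a nonempty closed convex subset $C\coloneqq\Phi(X)\subseteq\mathcal{H}$. Equipped with the restricted metric, $C$ is itself a complete $\CAT(0)$ space, so $\Phi\colon X\to C$ is a bijective isometry between complete $\CAT(0)$ spaces and Proposition \ref{p:isomorphism} applies: $\Phi(\cl\co K)=\cl\co\Phi(K)$, where on the right the convex hull and the closure are taken in $C$.

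The next step is a bit of bookkeeping about where hulls and closures are computed. Since $C$ is convex in $\mathcal{H}$, the convex hull of $\Phi(K)$ in $C$ agrees with its convex hull in $\mathcal{H}$ (any convex set in $\mathcal{H}$ containing $\Phi(K)$ restricts to a convex subset of $C$ containing $\Phi(K)$, and conversely $\co_{\mathcal{H}}\Phi(K)\subseteq C$ by convexity of $C$); and since $C$ is closed in $\mathcal{H}$, the closure of a subset of $C$ is the same whether taken in $C$ or in $\mathcal{H}$. Hence $\Phi(\cl\co K)=\cl_{\mathcal{H}}\co_{\mathcal{H}}\Phi(K)$. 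Because $\Phi$ is an isometry it is a homeomorphism onto $C$, so $\Phi(K)$ is compact in $\mathcal{H}$, and by the classical theorem $\cl_{\mathcal{H}}\co_{\mathcal{H}}\Phi(K)$ is compact. Therefore $\Phi(\cl\co K)$ is compact, and applying the continuous map $\Phi^{-1}$ gives that $\cl\co K=\Phi^{-1}\big(\Phi(\cl\co K)\big)$ is compact.

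For the second assertion I would argue similarly but avoid even needing closures: if $K=\{x_1,\dots,x_m\}$ is finite, then $\co_{\mathcal{H}}\Phi(K)$ is the image of the standard simplex $\Delta^{m-1}$ under the continuous affine map $(\lambda_1,\dots,\lambda_m)\mapsto\sum_{i}\lambda_i\Phi(x_i)$, hence compact and in particular closed, so $\cl\co\Phi(K)=\co\Phi(K)$. The argument in the proof of Proposition \ref{p:isomorphism} also gives $\Phi(\co S)=\co\Phi(S)$; combining with the identification $\co_C\Phi(K)=\co_{\mathcal{H}}\Phi(K)$ above, we get $\Phi(\co K)=\co_{\mathcal{H}}\Phi(K)$, which is compact, and hence $\co K=\Phi^{-1}\big(\co_{\mathcal{H}}\Phi(K)\big)$ is compact.

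I do not expect a genuine obstacle here, since the heavy lifting is done by Lemma \ref{l:isometry}, Proposition \ref{p:isomorphism}, and the classical Hilbert-space statement; the only point that needs a line of care is the verification that the convex hull of $\Phi(K)$ inside the convex set $C$ coincides with its convex hull in the ambient $\mathcal{H}$, and that closures relative to the closed set $C$ agree with closures in $\mathcal{H}$ — both of which are immediate from convexity and closedness of $C$.
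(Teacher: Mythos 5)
Your proposal follows essentially the same route as the paper's own proof: embed $X$ isometrically into a closed convex subset of a Hilbert space via Lemma \ref{l:isometry}, transport hulls and closures through Proposition \ref{p:isomorphism}, invoke the classical Hilbert-space fact (resp.\ Carath\'eodory/the simplex image for the finite case), and pull back along $\Phi^{-1}$. The only difference is that you explicitly verify that convex hulls and closures computed in $\Phi(X)$ agree with those computed in the ambient Hilbert space — a point the paper leaves implicit — which is a worthwhile clarification but not a different argument.
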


\begin{proof}
	Let $(X,d)$ be a flat complete $\CAT(0)$ space and $K\subseteq X$ a compact set. By Lemma \ref{l:isometry} there exists a mapping $\Phi:X\to \mathcal H$ that is an isometry from $X$ into a nonempty closed convex subset of a Hilbert space $\mathcal H$. Since $K$ is compact then $\Phi(K)$ is a compact set in $\mathcal H$. Then it follows that $\cl\co \Phi(K)$ is compact. By Proposition \ref{p:isomorphism}	we have $\Phi(\cl\co K)=\cl\co \Phi(K)$. 
	Consider the inverse mapping $\Phi^{-1}:\Phi(X)\to X$, which itself is an isometry and in particular continuous. Then $$\cl\co K=\Phi^{-1}(\Phi(\cl\co K))=\Phi^{-1}(\cl\co \Phi(K))$$
	is a compact set. 
	Now assume that additionally $K$ is a finite set. Then $\Phi(K)$ is a finite set in $\mathcal H$ and so it lies in a finite dimensional subspace of $\mathcal H$. By virtue of Carath\'eodory's Theorem (e.g. see \cite{Rockafellar}[Theorem 17.1]) it follows that $\co\Phi(K)$ is compact. Again by Proposition \ref{p:isomorphism} we have $\Phi(\co K)=\co\Phi(K)$ and consequently $\co K=\Phi^{-1}(\Phi(\co K))=\Phi^{-1}(\co\Phi(K))$ is compact.
	This completes the proof.
\end{proof}

\subsection{Some illustrations}
\begin{example}[Euclidean spaces]
	\label{example1}\normalfont{
		Euclidean spaces are the simplest examples of complete $\CAT(0)$ spaces. Let $\mathbb{E}^d$ be a $d$-dimensional Euclidean space. By the well known Carath\'eodory Theorem if $x\in \co S$ for some $S\subseteq \mathbb{E}^d$ then $x$ can be expressed as a convex combination of at most $d+1$ points from $S$. This means that there are $s_1,...,s_{d+1}\in S$ such that $x\in\co\{s_1,...,s_{d+1}\}$. Algebraically we have the representation $x=a_1s_1+...+a_{d+1}s_{d+1}$ where $a_i\in[0,1]$ for all $i=1,...,d+1$ and $\sum_ia_i=1$. By letting $a'\coloneqq\sum_{i=1}^{d}a_i$ then we can rewrite $x=a_{d+1}s_{d+1}+(1-a_{d+1})\sum_{i=1}^d a'_is_i$ where $a'_i\coloneqq a_{i}/a'$ for all $i=1,2,...,d$. Hence $x'\coloneqq\sum_{i=1}^d a'_is_i$ is some point lying in $\co\{s_1,...,s_d\}$. Following iteratively this method one obtains a sequence of points $x^{(1)}, x^{(2)}, x^{(3)},...,x^{(d)}$, where $x^{(1)}=x$, such that $x^{(1)}\in\co\{s_1,...,s_{d+1}\}, x^{(2)}\in\co\{s_1,...,s_d\},...,x^{(d)}\in\co\{s_1, s_2\}$. By construction we have the following inclusions $$x^{(d)}\in\thr\{s_1, s_2\},...,x^{(2)}\in\thr^{d-1}\{s_1,...,s_{d}\},x^{(1)}\in\thr^{d}\{s_1,...,s_{d+1}\}$$ By virtue of Carath\'eodory Theorem this means that $\co\{s_1,...,s_{d+1}\}\subseteq \thr^{d}\{s_1,...,s_{d+1}\}$. Because $x\in S$ is arbitrary then $\co S\subseteq \thr^{d}S$ and so $\deg_{\thr}S\leq d$. On the other hand $S\subseteq\mathbb{E}^n$ is any subset hence $\deg_{\thr}\mathbb{E}^d\leq d$. Therefore any Euclidean space is of finite type.
		In fact it holds $\deg_{\thr}\mathbb{E}^d\leq d$. To see this take $d+1$-linearly independent points $\{s_1,...,s_{d+1}\}$ in $\mathbb E^d$ then if $x\in\thr^d\{s_1,...,s_d\}$ then it can be easily checked that there are real numbers not all zero $a_1,...,a_{d+1}$ such that $x=a_1s_1+...a_{d+1}s_{d+1}$ i.e. $x\in\co\{s_1,...,s_{d+1}\}$. By earlier arguments this means that $\co\{x_1,...,x_{d+1}\}=\thr^d\{x_1,...,x_{d+1}\}$. Thus $\deg_{\thr}\mathbb E^d=d$.}
\end{example}

\begin{example}
	[A non-Euclidean metric space] 
	\label{example2}
	\normalfont{Consider the set $X\coloneqq\mathbb{R}^2_{+}\cup\mathbb{R}^2_{-}$ where $\mathbb{R}^2_{+}\coloneqq\{x\in\mathbb{R}^2: x_1, x_2\geq 0\}$ and $\mathbb{R}^2_{-}\coloneqq\{x\in\mathbb{R}^2: x_1, x_2\leq 0\}$. If $X$ is equipped with a metric $d(\cdot,\cdot)$ induced from the length of the shortest path connecting any two points in $X$ then it can be shown that $(X,d)$ is a complete $\CAT(0)$ space. Now let $S\subseteq X$. If $S$ is contained entirely in one of the two quadrants of $X$ i.e. $S\subseteq\mathbb{R}^2_{+}$ or $S\subseteq\mathbb{R}^2_{-}$, then it is easily seen from the previous example that $\deg_{\thr}S\leq 2$. Now suppose that $S=S_1\cup S_2$ where $S_1=S\cap\mathbb{R}^2_{+}$ and $S_2=S\cap \mathbb{R}^2_{-}$. Notice that any point in $S_1$ can be joined with a point in $S_2$ by a shortest line (possibly broken) only passing through the origin $0\in\mathbb{R}^2$ since $\mathbb{R}^2_{+}\cap\mathbb{R}^2_{-}=\{0\}$. Therefore $\thr(S_1\cup \{0\})\cup\thr(S_2\cup\{0\})=\thr S$. By same arguments $\thr(S_1\cup \{0\})\cap\thr(S_2\cup\{0\})=\{0\}$ implies 
		$$\thr^2(S_1\cup \{0\})\cup\thr^2(S_2\cup\{0\}))=\thr(\thr(S_1\cup \{0\})\cup\thr(S_2\cup\{0\}))=\thr^2S$$
		and in more generality 
		\begin{equation}
		\label{eq:threq}
		\thr^n(S_1\cup \{0\})\cup\thr^n(S_2\cup\{0\}))=\thr^nS,\hspace{0.2cm}\forall n\in\mathbb{N}
		\end{equation}
		But $S_1\cup\{0\}$ and $S_2\cup\{0\}$ are each subsets of Euclidean quadrants. Moreover the metric $d(\cdot,\cdot)$ coincides with the usual Euclidean metric. By previous example $\deg_{\thr}(S_i\cup\{0\})\leq 2$ for $i=1,2$. By equation \eqref{eq:threq} it follows that $\deg_{\thr}S\leq 2$. Since $S\subseteq X$ is arbitrary then $\deg_{\thr}X\leq 2$. Therefore $(X,d)$ is of finite type.}
\end{example}

\section{Fr\'echet Mean}
\label{s:Frechet}

\subsection{A general convex optimization problem}
Let $(X,d)$ be a complete $\CAT(0)$ space and $x_1,x_2,...,x_n\in X$. For a given set of non-negative numbers $w_1,w_2,...,w_n\in[0,1]$ consider the optimization problem
\begin{equation}
\label{eq:cvxopt}
\arg\min_{x\in X} F_p(x)\hspace{0.2cm}\text{where}\hspace{0.2cm}F_p(x)\coloneqq\sum_{i=1}^nw_id(x,x_i)^p,\hspace{0.2cm}p\in[1,+\infty).
\end{equation}
The functional $F_p(x)$ is convex and continuous in $x$. When $p=1$ then $F_1$ becomes the objective function in the \textit{Fermat-Weber} problem for the optimal facility location. 
A minimizer of $F_1$ exists and it is known as the \textit{median} of the points $x_1,x_2,...,x_n$ with respect to the weights $w_1,w_2,...,w_n$. When $p=2$ then $F_2$ is the objective function in the \textit{Fr\'echet mean} problem. Because $d(x,x_i)^2$ is strongly convex and continuous in $x$ then $F_2$ has a unique minimizer which we denote by $x^*$. This unique minimizer is known as the \textit{Fr\'echet mean} of the points $x_1, x_2,...,x_n$ with respect to the weights $w_1, w_2,...,w_n$. 

\begin{lemma}
	\label{Frechet}
	Let $(X,d)$ be a complete $\CAT(0)$ space. The Fr\'echet mean of any finite set of points in $X$ lies in the closure of the convex hull of the given set.
\end{lemma}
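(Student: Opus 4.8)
The plan is to argue by contradiction, using the metric projection onto the closed convex hull together with the fundamental projection inequality \eqref{eq:projectionsineq}. Throughout write $x_1,\dots,x_n\in X$ and $w_1,\dots,w_n\in[0,1]$ for the given data, and let $x^*$ denote the Fr\'echet mean, i.e. the unique minimizer of $F_2(x)=\sum_{i=1}^n w_i d(x,x_i)^2$.

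First I would set $C\coloneqq\cl\co\{x_1,\dots,x_n\}$ and record that $C$ is a nonempty closed convex subset of the complete $\CAT(0)$ space $X$: the set $\co\{x_1,\dots,x_n\}$ is convex by definition, and in a $\CAT(0)$ space the closure of a convex set is again convex since geodesics depend continuously on their endpoints. Hence by \cite[Proposition 2.4]{Berdellima} the metric projection $P_C x$ is nonempty and a singleton for every $x\in X$, and the inequality \eqref{eq:projectionsineq} is available for $C$. Now suppose for contradiction that $x^*\notin C$. Put $y\coloneqq P_C x^*$ and $r\coloneqq d(x^*,y)$, so that $r>0$. Since $x_i\in C$ for each $i$, applying \eqref{eq:projectionsineq} with the point $x_i\in C$ gives $d(x^*,x_i)^2\ge r^2+d(y,x_i)^2$, that is, $d(y,x_i)^2\le d(x^*,x_i)^2-r^2$ for all $i=1,\dots,n$.

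Next I would multiply these inequalities by $w_i\ge 0$ and sum, obtaining
\[
F_2(y)=\sum_{i=1}^n w_i\, d(y,x_i)^2\le \sum_{i=1}^n w_i\, d(x^*,x_i)^2-r^2\sum_{i=1}^n w_i = F_2(x^*)-r^2\sum_{i=1}^n w_i .
\]
Since $x^*$ is the \emph{unique} minimizer of $F_2$ (a fact already observed just before the statement, which forces not all weights to vanish), we have $\sum_{i=1}^n w_i>0$; combined with $r>0$ this yields $F_2(y)<F_2(x^*)$, contradicting the minimality of $x^*$. Therefore $x^*\in C=\cl\co\{x_1,\dots,x_n\}$, which is the assertion.

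The argument is short and I do not anticipate a genuine obstacle; the only points needing care are the remark that the closure of a convex set in a $\CAT(0)$ space is convex (so that $P_C$ and \eqref{eq:projectionsineq} genuinely apply) and the observation that $\sum_i w_i>0$. I note in passing that the same reasoning, \emph{mutatis mutandis}, also applies to the median $(p=1)$ and indeed to any minimizer of $F_p$: under the standing assumption $x^*\notin C$ one has $d(x^*,x_i)>0$, so $d(y,x_i)^2\le d(x^*,x_i)^2-r^2$ gives the strict inequality $d(y,x_i)<d(x^*,x_i)$ for every $i$, whence $F_p(y)<F_p(x^*)$.
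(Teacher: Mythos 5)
Your argument is correct and is essentially the paper's own proof: assume $x^*\notin\cl\co S$, project onto the closed convex set $\cl\co S$, apply the projection inequality \eqref{eq:projectionsineq} with each $x_i$ in place of $y$, and sum with the weights to contradict minimality of $x^*$. Your two added remarks --- that the closure of a convex set is convex (so the projection exists) and that $\sum_i w_i>0$ is needed for the strict inequality --- merely make explicit what the paper leaves implicit, and are welcome clarifications rather than deviations.
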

\begin{proof}
	Let $S\coloneqq\{x_1,x_2,...,x_n\}$ be some finite set in $X$. Denote by $\cl\co S$ the closure of the convex hull of $S$. Suppose that the Fr\'echet mean $x^*\notin \co\cl S$. Because $\cl\co S$ is closed and convex by construction then the metric projection $P_{\cl\co S}x^*$ of $x^*$ onto $\cl\co S$ exists and it is unique. On the other hand we have the inequality 
	$$d(x^*,y)^2\geq d(x^*,P_{\cl\co S}x^*)^2+d(P_{\cl\co S}x^*,y)^2,\quad\forall y\in \cl\co S$$
	and in particular we must have
	$$d(x^*,x_i)^2\geq d(x^*,P_{\cl\co S}x^*)^2+d(P_{\cl\co S}x^*,x_i)^2,\quad\forall i=1,2,...,n.$$
	Therefore we obtain the inequality
	\begin{align*}
	\sum_{i=1}^nw_id(x^*,x_i)^2&\geq \sum_{i=1}^nw_id(x^*,P_{\cl\co S}x^*)^2+ \sum_{i=1}^nw_id(P_{\cl\co S}x^*,x_i)^2\\&>\sum_{i=1}^nw_id(P_{\cl\co S}x^*,x_i)^2
	\end{align*}
	which yields $F_2(P_{\cl\co S}x^*)<F_2(x^*)$. This is a contradiction. 
\end{proof}
\begin{corollary}
	Similarly the median of any finite set of points in $X$ lies in the closure of the convex hull of the given set.
\end{corollary}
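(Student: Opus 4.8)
The plan is to run the proof of Lemma~\ref{Frechet} essentially verbatim, replacing the squared distances by the plain distances and exploiting that the distance from the minimizer to its projection is strictly positive. Let $S\coloneqq\{x_1,\dots,x_n\}$ and let $x^*$ be a minimizer of $F_1$, which exists as recalled above. We may assume at least one weight $w_i$ is positive, since otherwise $F_1\equiv 0$ and every point of $X$, in particular any point of $\cl\co S$, is a minimizer. Suppose, for contradiction, that $x^*\notin\cl\co S$. Since $\cl\co S$ is closed and convex in the complete $\CAT(0)$ space $X$, the metric projection $P_{\cl\co S}x^*$ exists and is unique, and $d(x^*,P_{\cl\co S}x^*)>0$ precisely because $x^*\notin\cl\co S$.

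The key step is to invoke inequality \eqref{eq:projectionsineq} with $y=x_i\in\cl\co S$, which gives
$$d(x^*,x_i)^2\ge d(x^*,P_{\cl\co S}x^*)^2+d(P_{\cl\co S}x^*,x_i)^2,\qquad i=1,\dots,n.$$
Since $d(x^*,P_{\cl\co S}x^*)^2>0$, this forces $d(x^*,x_i)^2>d(P_{\cl\co S}x^*,x_i)^2$, hence $d(x^*,x_i)>d(P_{\cl\co S}x^*,x_i)$ for every $i$. Multiplying by $w_i\ge 0$ and summing, and using that the inequality $w_id(x^*,x_i)\ge w_id(P_{\cl\co S}x^*,x_i)$ is strict for at least one index (any $i$ with $w_i>0$), we obtain $F_1(x^*)>F_1(P_{\cl\co S}x^*)$, contradicting the minimality of $x^*$. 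Therefore $x^*\in\cl\co S$.

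I do not anticipate a genuine obstacle here: the only points needing a moment's care are the passage from the squared projection inequality to a \emph{strict} inequality between the unsquared distances (immediate, since one adds the strictly positive term $d(x^*,P_{\cl\co S}x^*)^2$ before taking square roots) and the bookkeeping with possibly vanishing weights. Unlike the Fr\'echet mean, $F_1$ need not have a unique minimizer since $d(\cdot,x_i)$ is convex but not strongly convex; however uniqueness plays no role in the argument, so the conclusion holds for \emph{every} minimizer of $F_1$, which is exactly the assertion of the corollary.
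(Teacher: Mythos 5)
Your proof is correct and is exactly the adaptation the paper intends by the word ``Similarly'': you rerun the argument of Lemma~\ref{Frechet} with the projection inequality \eqref{eq:projectionsineq}, and you correctly isolate the one extra step the $p=1$ case needs, namely passing from the squared inequality to a strict inequality between unsquared distances before summing. The remarks on possibly vanishing weights and on non-uniqueness of the median are sensible refinements but do not change the route.
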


\subsection{A constructability theorem}
In an Euclidean space $\mathbb{E}$ if $x_1,x_2,..., x_n\in\mathbb{E}$ and $x^*$ denotes their Fr\'echet mean then it is well known that 
\begin{equation}
\label{eq:fmeuclidean}
x^*=\frac{1}{n}\sum_{i=1}^nx_i
\end{equation}
In fact one gets to $x^*$ iteratively from any starting point in $x_{i_1}\in\{x_1,...,x_n\}$ as follows
\begin{align*}
& x'_1=\frac{1}{2}x_{i_1}+\frac{1}{2}x_{i_2},\quad x_{i_2}\in\{x_1,...,x_n\}\setminus\{x_{i_1}\}\\
& x'_2=\frac{2}{3}x'_1+\frac{1}{3}x_{i_3},\quad x_{i_3}\in\{x_1,...,x_n\}\setminus\{x_{i_1}, x_{i_2}\}\\
& ......\\
&x'_k=\frac{k}{k+1}x'_{k-1}+\frac{1}{k+1}x_{i_{k+1}},\quad x_{i_{k+1}}\in\{x_1,...,x_n\}\setminus\{x_{i_1},...,x_{i_{k}}\}\\
&......
\end{align*}
In particular when $k=n-1$ we obtain 
$$x'_{n-1}=\frac{n-1}{n}x'_{n-2}+\frac{1}{n}x_{i_n}$$
which by the means of the recursion is equivalent to formula 
$$x'_{n-1}=\frac{1}{n}\sum_{j=1}^nx_{i_j}$$
This in turn is equivalent to \eqref{eq:fmeuclidean}. One way to think about this recursion is in terms of threading. Notice that $x_1'$ lies in the geodesic segment $[x_{i_1},x_{i_2}]$ joining the elements $x_{i_1}$ and $x_{i_2}$ implying $x_1'\in\thr\{x_1,x_2,...,x_n\}$. And in general we have $x'_k\in[x'_{k-1},x_{i_{k+1}}]$ equivalently $x'_k\in\thr^k\{x_1,x_2,...,x_n\}$.
Given a finite set $S$ and $x\in X$ we say that $x$ is {\em constructible} from $S$ in a finite number of steps whenever $x\in\thr^nS$ for some $n\in\mathbb N$.
Motivated by above recursion and taking advantage of threading we state the following result for complete $\CAT(0)$ spaces of finite type.

\begin{thm}
	\label{complexity}
	Let $(X,d)$ be of finite type and $S\subseteq X$ a finite subset. Then the Fr\'echet mean $x^*$ of $S$ lies in $\co S$. Moreover $x^*$ is constructible from $S$ in at most $2^{\deg_{\thr}S}-1$ steps.
\end{thm}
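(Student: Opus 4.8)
The plan is to deduce the first assertion from Lemma~\ref{Frechet} together with the compactness results of Section~\ref{s:threading}, and then to obtain the step count by a short induction on the threading level, an argument that in fact applies to \emph{every} point of $\co S$, the Fr\'echet mean entering only through the first assertion.

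For the inclusion $x^{*}\in\co S$: Lemma~\ref{Frechet} places the Fr\'echet mean $x^{*}$ in $\cl\co S$. Since $S$ is finite it is compact, and $X$ is of finite type, so part~(ii) of Corollary~\ref{thrdegcompact} (or, equivalently, Theorem~\ref{thrcompact} applied to $\thr^{\deg_{\thr}S}S$) shows that $\co S$ is compact, hence closed. Thus $\cl\co S=\co S$ and $x^{*}\in\co S$.

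For the step count, write $d:=\deg_{\thr}S$; this is finite because $X$ is of finite type. By the definition of the threading degree together with Theorem~\ref{thrcvxthm} we have $\co S=\thr^{d}S$, so $x^{*}\in\thr^{d}S$ already. It then suffices to establish the following claim by induction on $n$: every point of $\thr^{n}S$ is constructible from $S$ in at most $2^{n}-1$ elementary steps, where one elementary step means choosing a point on a geodesic segment whose two endpoints have already been produced, the points of $S$ being available from the start. For $n=1$, a point $z\in\thr S$ lies on some $[x,y]$ with $x,y\in S$ and is obtained in one step, and $2^{1}-1=1$. For the inductive step, if $z\in\thr^{n+1}S=\thr(\thr^{n}S)$ then $z\in[a,b]$ for some $a,b\in\thr^{n}S$; by hypothesis $a$ and $b$ are each constructible in at most $2^{n}-1$ steps, and one further step yields $z$, giving at most $2(2^{n}-1)+1=2^{n+1}-1$ steps in total. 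Applying the claim with $n=d$ shows that $x^{*}$ is constructible from $S$ in at most $2^{\deg_{\thr}S}-1$ steps.

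I do not expect a genuine obstacle, since the argument is essentially an assembly of earlier results; the two points that need care are (i) the justification that $\co S$ is \emph{closed}, so that $\cl\co S$ may be replaced by $\co S$, which is precisely where finiteness of $S$ and finite type of $X$ are used, and (ii) pinning down the accounting of an ``elementary step'' so that the governing recursion is $T(n)=2T(n-1)+1$ with $T(1)=1$, whose solution is $T(n)=2^{n}-1$. It is worth noting that this bound is deliberately pessimistic: as the Euclidean recursion displayed before the theorem illustrates, along a suitably chosen construction path one can frequently reach $x^{*}$ in far fewer steps, since there each new point is interpolated between the previously constructed point and a single point of $S$, giving a linear rather than exponential count.
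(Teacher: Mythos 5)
Your proposal is correct and follows essentially the same route as the paper: Lemma~\ref{Frechet} plus compactness of $\co S=\thr^{\deg_{\thr}S}S$ (Theorem~\ref{thrcompact}) gives $\cl\co S=\co S$, and the step count is the same binary-tree accounting, which the paper writes as the level-by-level sum $\sum_{m=1}^{k}2^{m-1}=2^{k}-1$ and you phrase equivalently as the recursion $T(n)=2T(n-1)+1$.
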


\begin{proof}
	Let $S=\{x_1,x_2,...,x_n\}$ where $x_i\in X$ for all $i$. By Lemma \ref{Frechet} the Fr\'echet mean $x^*$ of $S$ lies in $\cl\co S$. Assumption $(X,d)$ is of finite type implies that $S$ has finite threading degree. If $\deg_{\thr}S=k$ for some $k\in\mathbb{N}$ then by definition this means that $\thr^kS=\co S$. But $S$ is finite and therefore compact. By Lemma \ref{thrcompact} it follows that $\co S$ is compact and therefore a closed set. Then $\co S=\cl\co S$ implies $x^*\in\co S$. Now $x\in \thr^kS$ means that there are $x^{k-1}_0,x^{k-1}_1\in\thr^{k-1}S$ such that $x\in[x^{k-1}_0,x^{k-1}_1]$. Then there are $x^{k-2}_0,x^{k-2}_1,x^{k-2}_2,x^{k-2}_3\in\thr^{k-2}S$ such that $x^{k-1}_0\in[x^{k-2}_0,x^{k-2}_1]$ and $x^{k-1}_1\in[x^{k-2}_2,x^{k-2}_3]$ and so on. In general for $0\leq m\leq k$ there exist $x^{k-m}_0,x^{k-m}_1,...,x^{k-m}_{2^m-1}\in\thr^{k-m}S$ such that $x^{k-m+1}_0\in[x_0^{k-m},x_{1}^{k-m}], x^{k-m+1}_1\in[x_2^{k-m},x_{3}^{k-m}],...,x^{k-m+1}_{2^{m-1}-1}\in[x_{2^m-2}^{k-m},x_{2^m-1}^{k-m}]$. Hence at each step $m$ we need to compute at most $2^{m-1}$ elements. This means that in total we have to construct at most
	$\sum_{m=1}^k2^{m-1}=2^{k}-1=2^{\deg_{\thr}S}-1$
	elements.
\end{proof}

%
%
%

%

\bibliographystyle{sn-mathphys}
\bibliography{literature}

%


\end{document}